\numberwithin{equation}{section}
\newtheorem{theorem}{Theorem}[section]
\newtheorem{lemma}[theorem]{Lemma}
\newtheorem{proposition}[theorem]{Proposition}
\newtheorem{rem}[theorem]{Remark}
\renewenvironment{proof}[1][Proof]{\begin{trivlist}
\item[\hskip \labelsep {\bfseries #1}]}{\qed\end{trivlist}}
\newcommand{\ind}{\mathbf{1}}
\renewcommand{\ge}{\geq}
\newcommand{\Z}{\mathbb{Z}}
\renewcommand{\tilde}{\widetilde}
\renewcommand{\hat}{\widehat}
\DeclareMathSymbol{\leqslant}{\mathalpha}{AMSa}{"36} 
\DeclareMathSymbol{\geqslant}{\mathalpha}{AMSa}{"3E} 
\DeclareMathSymbol{\eset}{\mathalpha}{AMSb}{"3F}     
\renewcommand{\leq}{\;\leqslant\;}                   
\renewcommand{\geq}{\;\geqslant\;}                   
\newcommand{\mA}{{\ensuremath{\mathcal A}} }
\newcommand{\mL}{{\ensuremath{\mathcal L}} }
\newcommand{\mD}{{\ensuremath{\mathcal D}} }
\newcommand{\bP}{{\ensuremath{\mathbf P}} }
\newcommand{\bE}{{\ensuremath{\mathbf E}} }
\newcommand{\ga}{\alpha}
\newcommand{\gd}{\delta}
\newcommand{\gep}{\varepsilon}       
\newcommand{\gp}{\varphi}
\newcommand{\tgp}{\tilde\gp}
\newcommand{\gl}{\lambda}
\def\captionfont@{\footnotesize}
\def\captionheadfont@{\scshape}
\long\def\@makecaption#1#2{%
  \vspace{2mm}
  \setbox\@tempboxa\vbox{\color@setgroup
    \advance\hsize-6pc\noindent
    \captionfont@\captionheadfont@#1\@xp\@ifnotempty\@xp
        {\@cdr#2\@nil}{.\captionfont@\upshape\enspace#2}%
    \unskip\kern-6pc\par
    \global\setbox\@ne\lastbox\color@endgroup}%
  \ifhbox\@ne 
    \setbox\@ne\hbox{\unhbox\@ne\unskip\unskip\unpenalty\unkern}%
  \fi
  \ifdim\wd\@tempboxa=\z@ 
    \setbox\@ne\hbox to\columnwidth{\hss\kern-6pc\box\@ne\hss}%
  \else 
    \setbox\@ne\vbox{\unvbox\@tempboxa\parskip\z@skip
        \noindent\unhbox\@ne\advance\hsize-6pc\par}%
\fi
  \ifnum\@tempcnta<64 
    \addvspace\abovecaptionskip
    \moveright 3pc\box\@ne
  \else 
    \moveright 3pc\box\@ne
    \nobreak
    \vskip\belowcaptionskip
  \fi
\relax
}
\def\writefig#1 #2 #3 {\rlap{\kern #1 truecm
\raise #2 truecm \hbox{#3}}}
\newcommand{\tga}{\tilde{\alpha}}
\renewcommand{\subset}{\subseteq}
\renewcommand{\phi}{\varphi}
\newcommand{\bPs}{\bP_{\sigma}}
\newcommand{\bPt}{\bP_{\tau} }
\newcommand{\simn}{\stackrel{n\to\infty}{\sim}}
\begin{document}

\title[Intersection of two independent renewals]{Local asymptotics for the first intersection of two independent renewals} 

\author[K. Alexander]{Kenneth S. Alexander}
\address{Department of Mathematics, KAP 108\\
University of Southern California\\
Los Angeles, CA  90089-2532 USA}

\author[Q. Berger]{Quentin Berger}
\address{LPMA, Universit\'e Pierre et Marie Curie\\
Campus Jussieu, case 188\\
4 place Jussieu, 75252 Paris Cedex 5, France}
\email{quentin.berger@upmc.fr}

\begin{abstract}
We study the intersection of two independent renewal processes, $\rho=\tau\cap\sigma$.
Assuming that $\bP(\tau_1 = n ) = \gp(n)\, n^{-(1+\ga)}$ and $\bP(\sigma_1 = n ) = \tilde\gp(n)\, n^{-(1+ \tilde\ga)} $ for some $\ga,\tga \geq 0$ and some slowly varying $\gp,\tilde\gp$, we give the asymptotic behavior first of $\bP(\rho_1>n)$ (which is straightforward except in the case of $\min(\ga,\tilde\ga)=1$) and then of $\bP(\rho_1=n)$.  
The result may be viewed as a kind of reverse renewal theorem, as we determine probabilities $\bP(\rho_1=n)$ while knowing asymptotically the renewal mass function $\bP(n\in\rho)=\bP(n\in\tau)\bP(n\in\sigma)$.
Our results can be used to bound coupling-related quantities, specifically the increments $|\bP(n\in\tau)-\bP(n-1\in\tau)|$ of the renewal mass function.
\end{abstract}

\maketitle

\section{Intersection of two independent renewals}

We consider two independent (discrete) renewal processes $\tau$ and $\sigma$, whose law are denoted respectively $\bPt$ and $\bPs$, and the renewal process of  intersections, $\rho=\tau\cap\sigma$. We denote $\bP = \bP_{\tau} \otimes \bP_{\sigma}$.

\smallskip
The process $\rho$ appears in various contexts.  In pinning models, for example, it may appear directly in the definition of the model (as in \cite{cf:ABpinning}, where $\sigma$ represents sites with nonzero disorder values, and $\tau$ corresponds to the polymer being pinned) or it appears in the computation of the variance of the partition function via a replica method (see for example \cite{cf:Tonin}), and is central in deciding whether disorder is relevant or irrelevant in these models, cf. \cite{cf:BL}.

\smallskip
When $\tau$ and $\sigma$ have the same inter-arrival distribution, $\rho_1$ is related to the coupling time of $\tau$ and $\sigma$, if we allow $\tau$ and $\sigma$ to start at different points.
In particular, in the case $\mu:=\bE[\tau_1]<+\infty$, the coupling time $\rho_1$ has been used to study the rate of convergence in the renewal theorem, see \cite{cf:Lindvall, cf:Ney}, using that
\[|\bP(n\in \tau) - \bP(n\in\sigma)| \leq \bE\big[  |\ind_{\{n\in\tau\}} -\ind_{\{n\in\sigma\}}| \ind_{\{\rho_1 >n\}} \big] \leq \bP(\rho_1 >n) \, .\]
Hence, if $\sigma$ is \emph{delayed} by a random $X$ having the waiting time distribution $\nu$ of the renewal process (and denoting $\bP_{\nu}$ the delayed law of $\sigma$), we have that $\bP_{\nu}(n\in\sigma) =\frac{1}{\mu}$ for all $n$, and so $\bP_{\tau} \otimes \bP_{\nu}(\rho_1 >n )$ gives the rate of convergence in the renewal theorem.
This question has also been studied via a more analytic method in \cite{cf:Rogozin,cf:Grubel}.
Denoting $u_n:=\bP(n\in\tau)$ the renewal mass function ot $\tau$, Rogozin \cite{cf:Rogozin} proved that $u_n-\tfrac1\mu \sim \frac{1}{\mu^2} \sum_{k>n}^{+\infty} \bP(\tau_1>k)$ as $n\to\infty.$

In this paper, we consider only the non-delayed case, with a brief exception to study $|u_n-u_{n-1}|$, see  Theorem \ref{thm:un}.

\subsection{Setting of the paper}

We assume that there exist $\ga,\tga\geq 0$ and slowly varying functions $\gp,\tgp$ such that
\begin{equation}
\label{def:alphas}
\bP(\tau_1 = n ) = \gp(n)\, n^{-(1+\ga)}\, , \qquad \bP(\sigma_1 = n ) = \tilde\gp(n)\, n^{-(1+ \tilde\ga)} \, .
\end{equation}
(As mentioned above, $\tau$ and $\sigma$ are \emph{non-delayed}, if not specified otherwise.)
With no loss of generality, we assume that $\ga\leq \tga$.  We define $\mu_n:=\bE[\tau_1\wedge n]$ and $\tilde \mu_n:= \bE[\sigma_1 \wedge n]$ the truncated means, and also $\bE[\tau_1] = \mu= \lim_{n\to\infty} \mu_n \leq \infty$, and similarly $\tilde \mu =\lim_{n\to\infty} \tilde \mu_n$.

The assumption \eqref{def:alphas} is very natural, and is widely used in the literature (for example, once again in pinning models).
It covers in particular the case of the return times $\tau=\{n\, , \, S_{2n}=0\}$, where $(S_n)_{n\geq 0}$ is the simple symmetric nearest-neighbor random walk on $\Z^d$ (see e.g.\ \cite[Ch.~III]{cf:Feller} for $d=1$, \cite[Thm.\ 4]{cf:JainPruitt} for $d=2$ and \cite[Thm.\ 4]{cf:DK} for $d=3$), or the case $\tau=\{n\, , S_n=0\}$ where $(S_n)_{n\geq 0}$ an aperiodic random walk in the domain of attraction of a symmetric stable law, see \cite[Thm.~8]{cf:Kesten}.

\medskip
In Section \ref{sec:renewal}, we recall the strong renewal theorems for $\tau$ and $\sigma$ under assumption \eqref{def:alphas} (from \cite{cf:ABzero,cf:Doney,cf:Eric} in the recurrent case, \cite[App.~A.5]{cf:Giac} in the transient case), as well as newer reverse renewal theorems (from \cite{cf:ABzero}).
We collect the results when $\tau$ is recurrent in the following table, denoting $r_n:=\bP(\tau_1>n)$, and we refer to \eqref{eq:transient} for the transient case.
\begin{table}[htbp]\centering
\begin{tabular}{ |c|c|c|c|}
\hline
  & $\ga\geq1$ & $\ga\in(0,1)$ & $\ga=0$
\\
\hline
$\bP(n\in\tau) \simn$        &     $ \big( \mu_n\big)^{-1}$      &  $\frac{\ga \sin(\pi \ga)}{\pi}\ n^{-(1-\ga)} \gp(n)^{-1}$     &  $\frac{\gp(n)}{n\,  r_n^{2}} $
\\
\hline
\end{tabular}
\caption{ Asymptotics of the renewal mass function if $\tau$ is recurrent, and has  inter-arrival distribution $\bP(\tau_1=n)=\gp(n) n^{-(1+\ga)}$ with $\ga\geq 0$.
} \label{table:Doney} 
\end{table}

From Table \ref{table:Doney} and \eqref{eq:transient}, the renewal mass  function of $\rho$ satisfies
\begin{equation}
\label{Prho}
\bP(n\in\rho) = \bP(n\in\tau)\bP(n\in\sigma) =  \psi^*(n) \, n^{-\theta^*}
\end{equation}
for some $\theta^* \geq 0$ and slowly varying function $\psi^*(n)$.
For example, if both $\tau$ and $\sigma$ are recurrent we have 
\begin{equation}\label{thetastar}
  \theta^* = 2-\ga\wedge 1-\tga\wedge 1;
  \end{equation}
if also $\ga,\tilde\ga \in(0,1)$, then $\psi^*$ is a constant multiple of $1/\gp\tgp$. If instead both $\tau$ and $\sigma$ are transient then $\theta^*=2+\alpha+\tilde \alpha$. Note that $\rho$ is transient for $\theta^*>1$ and recurrent for $\theta^*<1$. Recalling that $\ga\leq\tga$, if we define
\begin{equation}
\label{def:alpha*}
  \alpha^* = \begin{cases} \alpha &\text{if $\rho$ is recurrent and } \alpha \geq 1,\\
    1-\theta^* &\text{if $\rho$ is recurrent and } \alpha < 1,\\
    \theta^*-1 &\text{if $\rho$ is transient}, \end{cases}
\end{equation}
then, based on Theorem \ref{thm:transient} in the transient case and 
Table \ref{table:Doney} in the recurrent case, we expect $\bP(\rho_1=n)$ to be expressed as $n^{-(1+\alpha^*)}$ multiplied by a slowly varying function.

Observe that the renewal function of $\rho$, defined as 
\[
U_n^*:=\sum_{k=0}^{n} \bP(n\in\rho),
\]
is always regularly varying, with exponent $\alpha^*=1-\theta^*$ in the recurrent case and $0$ in the transient case.

\medskip

Our goal is to derive from \eqref{def:alphas} the local asymptotics of the inter-arrival distribution, that is, the asymptotics of $\bP(\rho_1 =n)$.  For general renewal processes $\rho$ these asymptotics should not be uniquely determined by the asymptotic behavior of the renewal mass function \eqref{Prho} (which is known is our case), but the extra structure given by $\rho=\tau\cap\sigma$ under \eqref{def:alphas} makes such determination possible.

\begin{rem}\rm
\label{rem:rhorecurrent}
For $\rho$ to be recurrent, it is necessary that both $\tau$ and $\sigma$ are recurrent, so \eqref{thetastar} holds.
It follows from Table \ref{table:Doney} that $\rho$ is recurrent if and only if one of the following also holds:
\begin{itemize}\baselineskip 10pt 
\item[(i)] $\ga+\tga > 1$, \\
\item[(ii)] $\ga,\tga \in(0,1),\ga+\tga=1$ and $\sum_{n\geq 1} \frac{1}{n \gp(n)\tgp(n)} =+\infty$,\\
\item[(iii)] $\ga=0,\tga=1$ and $\sum_{n\geq 1} \frac{\gp(n)}{n \, r_n^2 \tilde \mu_n} =+\infty$.
\end{itemize}
\end{rem}

\subsection{Main results}

\subsubsection*{Case of transient $\rho$}

Since $\bP(n\in\rho)$ is summable (with sum $\bE(|\rho|)$), we must have $\theta^*\geq 1$. Here the following is immediate from (\cite{cf:ABzero}, Theorem~1.4), given below as Theorem \ref{thm:transient}.

\begin{theorem}
Assume \eqref{def:alphas}, and suppose that $\rho$ is transient. Then 
\[\bP(\rho_1 = n) \stackrel{n\to\infty}{\sim} \frac{1}{\bE(|\rho|)^2} \bP(n\in\tau)  \bP(n\in\sigma) \, .\]
\end{theorem}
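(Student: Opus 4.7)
The plan is to apply the reverse renewal theorem for transient renewals, Theorem \ref{thm:transient} (which is Theorem 1.4 of \cite{cf:ABzero}), directly to the process $\rho$. That theorem asserts that for any transient renewal process whose renewal mass function is regularly varying at infinity, the inter-arrival mass function is asymptotic to $(1-p)^2$ times the renewal mass function, where $p:=\bP(\rho_1<\infty)<1$. In the transient case $|\rho|$ is geometric with parameter $1-p$, so $\bE(|\rho|)=(1-p)^{-1}$, and the prefactor $(1-p)^2$ therefore equals $1/\bE(|\rho|)^2$, exactly matching the constant in the claimed asymptotic.

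It only remains to verify the regular-variation hypothesis for the renewal mass function of $\rho$. By independence, $\bP(n\in\rho)=\bP(n\in\tau)\bP(n\in\sigma)$, and each factor is regularly varying under \eqref{def:alphas}: in the recurrent case the asymptotics are listed in Table \ref{table:Doney}, and in the transient case in \eqref{eq:transient}. The product is therefore regularly varying, as in \eqref{Prho}, and the transience of $\rho$ forces $\theta^*\geq 1$, because $\sum_n \bP(n\in\rho)=\bE(|\rho|)<\infty$. With the hypotheses in hand, Theorem \ref{thm:transient} gives
\[
\bP(\rho_1=n) \stackrel{n\to\infty}{\sim} (1-p)^2\,\bP(n\in\rho) = \frac{1}{\bE(|\rho|)^2}\,\bP(n\in\tau)\bP(n\in\sigma),
\]
which is the desired conclusion.

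There is essentially no substantive obstacle in this argument, since the paper explicitly flags the result as immediate: the delicate technical work, notably the handling of the borderline exponent $\theta^*=1$ where extra regularity of the slowly varying factor is needed, has already been carried out in \cite{cf:ABzero} and is built into the hypotheses of Theorem \ref{thm:transient}.
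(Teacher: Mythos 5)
Your argument is exactly the paper's: apply Theorem~\ref{thm:transient} (Theorem~1.4 of \cite{cf:ABzero}) to $\rho$, noting that $\bP(n\in\rho)=\bP(n\in\tau)\bP(n\in\sigma)$ is regularly varying by Table~\ref{table:Doney} and \eqref{eq:transient}, and that the constant $(1-p)^2$ matches $1/\bE(|\rho|)^2$ since $|\rho|$ is geometric. The paper states this as ``immediate,'' and your write-up simply spells out the same verification.
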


\subsubsection*{Case of recurrent $\rho$}

Here $\tau$ and $\sigma$ must be recurrent, so \eqref{thetastar} holds with $\theta^*\in [0,1]$, and $\ga^* = 1- \theta^*$ if $\ga\leq 1$, $\ga^*= \ga$ if $\ga \geq 1$.

\begin{theorem}
\label{thm:Prhon2}
Assume \eqref{def:alphas}, and suppose that $\rho$ is recurrent. Then for $\alpha^*$ from \eqref{def:alpha*} (with $\theta^*$ defined as in \eqref{thetastar}), the following hold.

(i) If $\alpha^* \in(0,1)$ then
\begin{equation}
\label{Prhon1}
\bP(\rho_1 > n) \simn \frac{\sin(\pi\alpha^*)}{\pi} \psi^*(n)^{-1}\, n^{-\alpha^*}\, .
\end{equation}

(ii) If $\alpha^*=0$ then
\begin{equation}
\label{Prhon4}
  \bP(\rho_1 > n) \simn \left(  \sum_{j=1}^n \frac{\psi^*(j)}{j} \right)^{-1},
\end{equation}
which is slowly varying.

(iii) If $\alpha^*\geq 1$ then
\begin{equation}
\label{Prhon2}
\bP(\rho_1 > n) \simn \tilde\mu_n\bP(\tau_1>n) + \mu_n\bP(\sigma_1>n).
\end{equation}
\end{theorem}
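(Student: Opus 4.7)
The plan is to analyze all three cases via the identity
\[\sum_{n \geq 0} \bP(\rho_1 > n) z^n = \frac{1}{(1-z) U^*(z)}, \qquad U^*(z) := \sum_{n \geq 0} \bP(n \in \rho) z^n,\]
combined with Karamata's Tauberian theorem. In each case, I would first convert the known asymptotic $\bP(n\in\rho)\sim\psi^*(n)n^{-\theta^*}$ into the behaviour of $U^*(e^{-\lambda})$ as $\lambda\to0^+$, and then, using the monotonicity of $n\mapsto\bP(\rho_1>n)$, extract its tail from the reciprocal $1/[(1-e^{-\lambda})U^*(e^{-\lambda})]$.

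In case (i), where $\theta^*=1-\alpha^*\in(0,1)$, Karamata yields $U^*(e^{-\lambda})\sim\Gamma(\alpha^*)\psi^*(1/\lambda)\lambda^{-\alpha^*}$, hence $\sum\bP(\rho_1>n)e^{-\lambda n}\sim\lambda^{\alpha^*-1}/[\Gamma(\alpha^*)\psi^*(1/\lambda)]$; a second Tauberian step applied to the monotone tail gives $\bP(\rho_1>n)\sim n^{-\alpha^*}/[\Gamma(\alpha^*)\Gamma(1-\alpha^*)\psi^*(n)]$, which is \eqref{Prhon1} after invoking the reflection formula $\Gamma(\alpha^*)\Gamma(1-\alpha^*)=\pi/\sin(\pi\alpha^*)$. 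In case (ii), $\theta^*=1$ places us in the de Haan slowly-varying regime: $U^*(e^{-\lambda})\sim U^*_{1/\lambda}$ (itself slowly varying), so $\sum\bP(\rho_1>n)e^{-\lambda n}\sim1/(\lambda U^*_{1/\lambda})$, and monotone inversion produces \eqref{Prhon4}.

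Case (iii) is the main obstacle, because the leading-order Tauberian only yields $\lambda U^*(e^{-\lambda})\to c:=\lim\bP(n\in\rho)$, which merely recovers $\sum\bP(\rho_1>n)=1/c=\bE[\rho_1]$. The tail must be extracted from the next-order correction
\[U^*(e^{-\lambda})-c/\lambda=\sum_n\bigl(\bP(n\in\rho)-c\bigr)e^{-\lambda n}+O(1),\]
which I would evaluate through the algebraic identity
\[\bP(n\in\tau)\bP(n\in\sigma)-c=\bigl(\bP(n\in\tau)-\tfrac{1}{\mu}\bigr)\bP(n\in\sigma)+\tfrac{1}{\mu}\bigl(\bP(n\in\sigma)-\tfrac{1}{\tilde\mu}\bigr)\]
combined with Rogozin's sharpening $\bP(n\in\tau)-1/\mu\sim(\mu-\mu_n)/\mu^2$ (or, when $\mu=\infty$, the asymptotic $\bP(n\in\tau)\sim1/\mu_n$ from Table~\ref{table:Doney}). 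Since $\mu-\mu_n$ is regularly varying of index $1-\alpha$, a further Karamata step identifies the resulting sum with the Laplace transform of $\tilde\mu_n\bP(\tau_1>n)+\mu_n\bP(\sigma_1>n)$; monotone inversion and careful tracking of the constants then give \eqref{Prhon2}. The most delicate subcase is $\alpha=\tilde\alpha=1$ with $\mu=\tilde\mu=\infty$, where the truncated means themselves diverge and Rogozin's expansion must be replaced by the Table~\ref{table:Doney} asymptotic throughout; here the analysis enters the de Haan class and requires a careful slowly-varying bookkeeping to convert the Laplace asymptotic back into the claimed tail.
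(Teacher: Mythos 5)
Your treatment of cases (i) and (ii) matches the paper, which indeed cites BGT Theorem 8.7.3 (precisely the Karamata Tauberian inversion you describe) applied to $U_n^*$. The route to (iii) is where the comparison is interesting, and where your proposal has a genuine gap.

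For $1<\ga\leq\tga$ your plan is essentially what the paper does in its Section~\ref{sec:gepeffectinf}: there the authors invoke Frenk's equivalence (Proposition~\ref{prop:Frenk}, which packages exactly the Rogozin-type second-order expansion $u_n-1/\mu\sim\frac{1}{\mu^2(\ga-1)}n\bP(\tau_1>n)$ together with its Tauberian inverse), multiply out $u_n\tilde u_n - \frac{1}{\mu\tilde\mu}$ as in your algebraic identity, and read off \eqref{Prhon2}. So that sub-case is sound, modulo the usual care that the remainder-term Tauberian step needs monotonicity of $\bP(\rho_1>n)$ and regular variation of the target, which Frenk supplies.

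The gap is $\ga=1$ (and more acutely $\ga=\tga=1$ with $\mu=\tilde\mu=\infty$). There your decomposition $\bP(n\in\tau)\bP(n\in\sigma)-c$ degenerates: $c=0$ and the claimed expansion collapses to the raw Abelian statement $U^*(e^{-\gl})\sim\psi^*(1/\gl)/\gl$, from which inverting to get a tail $\sim L(n)/n$ is a genuine second-order (de Haan/$\Pi$-class) Tauberian problem. The de Haan equivalence $\bP(\rho_1>n)\in\mathrm{RVS}_{-1}\Leftrightarrow u_n\tilde u_n\in\Pi$ delivers \emph{regular variation} but not the explicit constant $\tilde\mu_n\bP(\tau_1>n)+\mu_n\bP(\sigma_1>n)$, because the auxiliary slowly varying function in $\Pi$ is not automatically identified with $\mu_n,\tilde\mu_n$. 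Saying ``careful slowly-varying bookkeeping'' does not close this; it is precisely the obstacle the paper sidesteps. The paper's resolution is to use Frenk only to establish that $\bP(\rho_1>n)$ is regularly varying (hence $\bP(\rho_1>\gep n)=O(\bP(\rho_1>n))$), and then to get the sharp constant by a probabilistic decomposition: writing $\{\rho_1>n\}$ in terms of the first ``long'' gap (length $>(1-2\gep)n$) in either $\tau$ or $\sigma$, and using Lemma~\ref{lem:EK} ($\bE K=\bE\sigma_1$ for truncated processes) to show the expected number of renewals before that long gap is $\tilde\mu_n$ (resp.\ $\mu_n$). This works uniformly over $1\leq\ga\leq\tga$, gives a conceptual meaning to the coefficients $\mu_n,\tilde\mu_n$, and avoids the second-order Tauberian analysis that your route would require in the boundary case. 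You should either supply the missing de Haan identification or switch to a decomposition of this kind for $\ga=1$.

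One further caution on your final inversion step even when $\ga>1$: since $\ga^*\geq 1$ the target $\bP(\rho_1>n)$ is summable, so you are reading the tail off a \emph{correction} term of the Laplace transform. Tauberian theorems with remainder need one-sided control and regular variation of the remainder; establishing that $\bP(\rho_1>n)$ is regularly varying cannot come for free from the transform, on pain of circularity. Frenk's Proposition (or the probabilistic argument) supplies this; your write-up should make explicit where it comes from.
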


In Theorem \ref{thm:Prhon2}(iii), $\tilde\mu_n,\mu_n$ are slowly varying since $\tga\ge\ga\geq 1$ (recall \eqref{def:alpha*}), and they may be replaced by $\mu$ or $\tilde\mu$ if that mean is finite.

\smallskip
We will prove Theorem \ref{thm:Prhon2} in Sections \ref{sec:alpha*pos}--\ref{sec:alpha*0inf}. 
The cases (i) and (ii) are essentially immediate from known relations of form $\bP(\rho_1>n)\sim c/U_n^*$ and are given in Section~\ref{sec:alpha*pos}. Item (iii) seems to be a new result, and is treated in Section \ref{sec:alpha*0inf} via a probabilistic method.
Note that in all cases, $\bP(\rho_1>n)$ is regularly varying with exponent $-\ga^*$.

\medskip

To obtain the asymptotics of $\bP(\rho_1=n)$ from Theorem~\ref{thm:Prhon2} (in the case $\alpha^*>0 $), or using the weak reverse renewal Theorem~\ref{thm:AB} (in the case $\ga^*=0$),  we only need to show that $\bP(\rho_1=k)$ is approximately constant on an interval $[(1-\gep)n,n]$ with $\gep$ small. To that end we have the following lemma, which we will prove in Section~\ref{sec:regvary}.

\begin{lemma}
\label{lem:regvarying}
Assume \eqref{def:alphas}, and suppose that $\rho$ is recurrent. Let $v_n:= \bP(\rho_1>  n)^2 \bP(n\in\rho)$.
Then for every $\gd>0$, there exists some $\gep>0$ such that, if $n$ is large enough we have for all $k\in(0,\gep n)$
\begin{equation}\label{stretched}
  (1-\gd) \bP(\rho_1=n-k)-\gd v_n \leq \bP(\rho_1=n) \leq (1+\gd) \bP(\rho_1=n+k) + \gd v_n.
\end{equation}
\end{lemma}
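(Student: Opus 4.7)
My plan is to decompose $\bP(\rho_1 = n)$ by the positions $\bar T := \max\{t < n : t \in \tau\}$ and $\bar S := \max\{s < n : s \in \sigma\}$ of the last pre-$n$ renewals of $\tau, \sigma$. By the renewal Markov property applied separately to each process, combined with independence of $\tau, \sigma$, one gets
\[
\bP(\rho_1 = n,\, \bar T = t,\, \bar S = s) = \bP(A_{t, s})\, \bP(\tau_1 = n - t)\, \bP(\sigma_1 = n - s),
\]
where $A_{t, s} := \{t \in \tau,\, s \in \sigma,\, \rho_1 > \max(t, s)\}$. The crucial point is that the weights $\bP(A_{t, s})$ do not depend on $n$. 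Re-parametrizing by the last-jump sizes $u = n - t$, $v = n - s$,
\[
\bP(\rho_1 = n) = \sum_{u, v = 1}^{n} \bP(A_{n - u,\, n - v})\, \bP(\tau_1 = u)\, \bP(\sigma_1 = v),
\]
and analogously for $\bP(\rho_1 = n + k)$, with $n$ replaced by $n + k$. The factors $\bP(\tau_1 = u)\bP(\sigma_1 = v)$ are common to both sums, so the comparison reduces to that between $\bP(A_{n - u,\, n - v})$ and $\bP(A_{n + k - u,\, n + k - v})$.

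I would perform this comparison termwise on a main range $\mathcal M := \{(u, v) : 1 \leq u, v \leq \eta n\}$, with $\eta = \eta(\delta) > 0$ small. For $(u, v) \in \mathcal M$, the indices $n - u, n - v, n + k - u, n + k - v$ are all of order $n$. Combining slow variation of $\bP(\cdot \in \tau), \bP(\cdot \in \sigma)$ (from Table~\ref{table:Doney}) and of $\bP(\rho_1 > \cdot)$ (from Theorem~\ref{thm:Prhon2}), and using that $\bP(A_{t, s})$ is comparable, up to ratios tending to $1$, to $\bP(t \in \tau)\bP(s \in \sigma)\bP(\rho_1 > \max(t, s))$ for $t, s$ of order $n$, one should obtain uniformly over $(u, v) \in \mathcal M$ and $k \in (0, \epsilon n)$ that
\[
\frac{\bP(A_{n + k - u,\, n + k - v})}{\bP(A_{n - u,\, n - v})} \geq 1 - \delta/2,
\]
provided $\epsilon/\eta$ is small and $n$ is large. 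Denoting by $M_n$ and $M_{n+k}$ the contributions from $\mathcal M$ to the respective sums, this yields $M_n \leq (1 + \delta/2) M_{n+k} \leq (1 + \delta/2) \bP(\rho_1 = n + k)$.

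The main obstacle is showing that the boundary contribution $R_n := \bP(\rho_1 = n) - M_n$, coming from $(u, v) \notin \mathcal M$ (i.e.\ where at least one last-jump exceeds $\eta n$), is at most $\delta v_n / 2$. The crude independence bound $\bP(A_{n-u, n-v}) \leq \bP(n - u \in \tau)\bP(n - v \in \sigma)$ and the non-intersection bound $\bP(A_{n-u, n-v}) \leq \bP(\rho_1 > n - \min(u, v))$ are each by themselves too weak, because the tail $\bP(\tau_1 > \eta n) \cdot \bP(n \in \sigma)$ and $\bP(\rho_1 > \eta n)$ can individually exceed $v_n$. One must combine them: taking the pointwise minimum of the two and summing, using the regularly varying asymptotics of $\bP(\tau_1 > \cdot), \bP(\sigma_1 > \cdot)$ together with the specific form of $v_n = \bP(\rho_1 > n)^2 \bP(n \in \rho)$, one should obtain an estimate $R_n \leq C \eta^c v_n$ for some $c > 0$ depending on $\alpha, \tilde\alpha$. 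Choosing $\eta$ small enough in terms of $\delta$ gives the desired upper bound. The lower bound $\bP(\rho_1 = n) \geq (1 - \delta) \bP(\rho_1 = n - k) - \delta v_n$ follows by the symmetric argument, with the same main range $\mathcal M$ and the slow-variation estimate reversed.
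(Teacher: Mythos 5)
Your decomposition by the positions $\bar T,\bar S$ of the last pre-$n$ renewals is a legitimate starting point, though the weight should involve $\rho_1>\min(t,s)$, not $\max(t,s)$: once the terminal gaps $(\bar T,n]$ and $(\bar S,n]$ are fixed, the no-intersection constraint on $(0,n)$ reduces to no intersection on $(0,\min(t,s)]$, because one of $\tau,\sigma$ has no renewals in $(\min,\max]$. With that correction the factorization
$\bP(\rho_1=n,\bar T=t,\bar S=s)=\bP(A_{t,s})\bP(\tau_1=n-t)\bP(\sigma_1=n-s)$
is valid.

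The substantial gaps are in the two steps that carry the argument. First, the main-range comparison $\bP(A_{n+k-u,n+k-v})/\bP(A_{n-u,n-v})\geq 1-\gd/2$ rests on the assertion that $\bP(A_{t,s})$ equals $\bP(t\in\tau)\bP(s\in\sigma)\bP(\rho_1>\max(t,s))$ up to a $1+o(1)$ factor. That is a strong approximate-independence statement and it does not follow from the regular variation of the individual mass functions; establishing it (even in the weaker form that $\bP(A_{t,s})$ is ``slowly varying'' jointly in $(t,s)$) is essentially the same difficulty as the lemma itself, since $A_{t,s}$ is again a renewal--plus--non-intersection event. You have re-encoded the problem rather than reduced it; a last-renewal decomposition of $A_{t,s}$ simply regenerates quantities of the same type.

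Second, the boundary estimate $R_n\leq C\eta^c v_n$ is asserted but appears to be false in the range $\ga<1$. The contribution with $u>\eta n$ is precisely $\bP(\rho_1=n,\, n-\bar T>\eta n)$, the probability that $\rho_1=n$ with the \emph{last} $\tau$-gap longer than $\eta n$. For $\ga<1$ the conditional law of the first (hence, by reversibility of the bridge, the last) $\tau$-gap given $\{\rho_1=n\}$ puts mass bounded below on gaps of order $n$, by arcsine-type behaviour of $\ga$-stable renewal sets; so this piece of $R_n$ is of the same order as $\bP(\rho_1=n)\asymp v_n$, not smaller by a power of $\eta$. Concretely, in the case $\ga=\tga\in(1/2,1)$ the bounds you propose, $\bP(A_{n-u,n-v})\leq\bP(n-u\in\tau)\bP(n-v\in\sigma)$ and $\bP(A_{n-u,n-v})\leq\bP(\rho_1>n-\max(u,v))$, give after summation over $u>\eta n$ a quantity of order at least $v_n$ (and in some regimes strictly larger), so taking their minimum does not save the argument.

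The paper's proof avoids exactly this obstruction. It does not privilege the last gap: Lemma~\ref{lem:longjump} shows that on $\{\rho_1=n\}$ the event of having no gap of length $\geq\gd n$ \emph{anywhere} in $[0,n]$, in $\tau$ or $\sigma$, has probability $\leq e^{-c/\gd}v_n$, and Lemma~\ref{lem:stretch} then locates the maximal gap (wherever it sits), stretches it together with the longest associated interval of the other process by $k$, and obtains an injection into $\{\rho_1=n+k\}$ at multiplicative cost $1+\gd$. Your approach is genuinely different, but it is missing both the mechanism that would give the main-range comparison and a correct way to dispose of the boundary, and I do not see how to repair it within the decomposition you chose.
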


We will see later that $v_n = O(\bP(\rho_1=n))$, so Lemma \ref{lem:regvarying} is actually true without the $\gd v_n$ terms, but we will not need this improved result.

\smallskip
We can now state our main theorem, which we will prove in Section \ref{sec:regvarying}. 

\begin{theorem}
\label{thm:main2}
Assume \eqref{def:alphas} with $\tga\geq \ga$, and suppose that $\rho$ is recurrent. Let $\ga^*$ be as in \eqref{def:alpha*}.

(i) If  $\alpha^* \in(0,1)$ then
\[
 \bP(\rho_1=n) \stackrel{n\to\infty}{\sim} \frac{ \alpha^*\, \sin(\pi \alpha^*) }{\pi }\,   \psi^*(n)^{-1} n^{-(1+\alpha^*)}  \, .
 \]

(ii) If $\alpha^*=0$ then
\[
  \bP(\rho_1=n) \simn \left( \sum_{k=1}^n \frac{\psi^*(k)}{k} \right)^{-2}  \frac{\psi^*(n)}{n}\, . 
  \]

(iii) If $\alpha^*\geq 1$ then
\[
  \bP(\rho_1=n)  \simn \tilde\mu_n \, \bP(\tau_1 = n) + \mu_n\,  \bP(\sigma_1= n) \simn \frac{\tilde\mu_n \gp(n) + \mu_n \tgp(n) n^{\ga-\tga}}{n^{1+\ga^*}} \, .
  \]
\end{theorem}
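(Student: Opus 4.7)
The plan is to extract the local asymptotics of $\bP(\rho_1=n)$ from the tail asymptotics of Theorem \ref{thm:Prhon2} by discrete differentiation, using Lemma \ref{lem:regvarying}. For any $\delta>0$, let $\epsilon=\epsilon(\delta)$ be given by the lemma. Applying the right-hand inequality with base $n$ and shifts in $(0,\epsilon n)$ gives $\bP(\rho_1=k)\geq \frac{1}{1+\delta}(\bP(\rho_1=n)-\delta v_n)$ for $k\in(n,(1+\epsilon)n]$, and summing against the telescoping identity
\[
\bP(\rho_1>n)-\bP(\rho_1>(1+\epsilon)n)=\sum_{k=n+1}^{\lfloor(1+\epsilon)n\rfloor}\bP(\rho_1=k)
\]
yields the upper bound $\bP(\rho_1=n)\leq \frac{1+O(\delta)}{\epsilon n}[\bP(\rho_1>n)-\bP(\rho_1>(1+\epsilon)n)]+O(\delta v_n)$. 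The symmetric argument using the left-hand inequality and the sum on $[(1-\epsilon)n+1,n]$ produces the matching lower bound. Since a direct check from $\bP(n\in\rho)=\psi^*(n)n^{-\theta^*}$ and Theorem \ref{thm:Prhon2} shows $v_n$ to be of the same order as the claimed $\bP(\rho_1=n)$, the $\delta v_n$ errors are absorbed by the $(1\pm O(\delta))$ tolerance and vanish in the iterated limit $n\to\infty$, $\epsilon\to 0$, $\delta\to 0$.

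For case (i) with $\alpha^*\in(0,1)$, Theorem \ref{thm:Prhon2}(i) and regular variation immediately give $\bP(\rho_1>n)-\bP(\rho_1>(1+\epsilon)n)\sim\frac{\sin(\pi\alpha^*)}{\pi}\psi^*(n)^{-1}n^{-\alpha^*}[1-(1+\epsilon)^{-\alpha^*}]$, and $[1-(1+\epsilon)^{-\alpha^*}]/\epsilon\to\alpha^*$ as $\epsilon\to 0$ produces the advertised constant $\alpha^*\sin(\pi\alpha^*)/\pi$. For case (ii) with $\alpha^*=0$, Theorem \ref{thm:Prhon2}(ii) gives $\bP(\rho_1>n)\sim L(n)^{-1}$ with $L(n):=\sum_{j=1}^n \psi^*(j)/j$; slow variation of $\psi^*$ yields $L((1+\epsilon)n)-L(n)\sim \psi^*(n)\log(1+\epsilon)$, so $\bP(\rho_1>n)-\bP(\rho_1>(1+\epsilon)n)\sim \psi^*(n)\log(1+\epsilon)/L(n)^2$, and $\log(1+\epsilon)/\epsilon\to 1$ delivers the asymptotic $\psi^*(n)/(n L(n)^2)$.

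The \textbf{main obstacle} is case (iii), where $\bP(\rho_1>n)\sim A_n+B_n$ with $A_n:=\tilde\mu_n \bP(\tau_1>n)$ and $B_n:=\mu_n\bP(\sigma_1>n)$ combines two regularly varying components with possibly different exponents $\alpha,\tilde\alpha\geq 1$. For fixed $\epsilon>0$ and $m=\lfloor(1+\epsilon)n\rfloor$, slow variation of $\mu_n,\tilde\mu_n,\gp,\tgp$ gives $A_m/A_n\to(1+\epsilon)^{-\alpha}$ and $B_m/B_n\to(1+\epsilon)^{-\tilde\alpha}$; since $(A_n+B_n)-(A_m+B_m)$ is of positive order $A_n+B_n$ for each fixed $\epsilon>0$, the equivalence $\bP(\rho_1>n)\sim A_n+B_n$ transfers to differences, yielding $\bP(\rho_1>n)-\bP(\rho_1>m)\sim A_n[1-(1+\epsilon)^{-\alpha}]+B_n[1-(1+\epsilon)^{-\tilde\alpha}]$ as $n\to\infty$. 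The Karamata identity $\bP(\tau_1>n)\sim n\bP(\tau_1=n)/\alpha$ (valid since $\alpha\geq 1$) rewrites this as $\epsilon n\,[\tilde\mu_n\bP(\tau_1=n)\,c_1(\epsilon)+\mu_n\bP(\sigma_1=n)\,c_2(\epsilon)]$ with $c_i(\epsilon)\to 1$, and dividing by $\epsilon n$ then letting $\epsilon\to 0$ yields the sum $\tilde\mu_n\bP(\tau_1=n)+\mu_n\bP(\sigma_1=n)$; the explicit form in terms of $\gp,\tgp$ follows from \eqref{def:alphas}.

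A subtlety in case (iii) is to verify that when expanding $A_n-A_m=\tilde\mu_n[\bP(\tau_1>n)-\bP(\tau_1>m)]-[\tilde\mu_m-\tilde\mu_n]\bP(\tau_1>m)$, the ``cross term'' $[\tilde\mu_m-\tilde\mu_n]\bP(\tau_1>m)$ of size $\epsilon n\,\bP(\sigma_1>n)\bP(\tau_1>n)$ is of strictly lower order than $\epsilon n\,\tilde\mu_n\bP(\tau_1=n)$. This follows either from $n\bP(\sigma_1>n)\to 0$ when $\tilde\alpha>1$ (since $\tilde\mu_n$ has a positive limit in that regime), or from the de~Haan-type growth $\tilde\mu_n/\tgp(n)\to\infty$ when $\tilde\alpha=1$ (since $\tilde\mu_n\sim\int_1^n \tgp(t)/t\,dt\gg\tgp(n)$ when $\tilde\mu=\infty$, while $\tgp(n)\to 0$ when $\tilde\mu<\infty$).
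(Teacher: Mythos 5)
The proposal follows the paper's own strategy: discrete differentiation of the tail asymptotics from Theorem~\ref{thm:Prhon2} via the difference quotients $A_n^\pm$, combined with Lemma~\ref{lem:regvarying} and the observation that $v_n$ is of the same order as $A_n^\pm$. Your treatment of cases (i) and (iii) is sound, and the cross-term discussion in case (iii) is a reasonable (if unnecessary, given the way you set it up) sanity check.

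However, there is a genuine gap in your case (ii). You claim that $\bP(\rho_1>n)\sim L(n)^{-1}$ with $L(n)=\sum_{j\leq n}\psi^*(j)/j$, together with $L((1+\gep)n)-L(n)\sim\psi^*(n)\log(1+\gep)$, gives
\[
\bP(\rho_1>n)-\bP(\rho_1>(1+\gep)n)\simn \frac{\psi^*(n)\log(1+\gep)}{L(n)^2}.
\]
This does not follow. When $\alpha^*=0$ the tail $\bP(\rho_1>n)$ is \emph{slowly} varying, so $L((1+\gep)n)/L(n)\to 1$ for fixed $\gep$, and the target quantity $\psi^*(n)\log(1+\gep)/L(n)^2$ is $o\bigl(L(n)^{-1}\bigr)$, since $\psi^*(n)/L(n)\to 0$. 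Writing $\bP(\rho_1>n)=L(n)^{-1}(1+a_n)$ with $a_n\to 0$, the difference picks up the unknown contribution $a_n L(n)^{-1}-a_{\lfloor(1+\gep)n\rfloor}L((1+\gep)n)^{-1}$, which is $o(L(n)^{-1})$ but need not be $o\bigl(\psi^*(n)/L(n)^2\bigr)$; Theorem~\ref{thm:Prhon2}(ii) supplies no rate for $a_n$. In other words, a first-order tail asymptotic with slowly varying tail cannot be differenced to yield a second-order quantity. This is exactly why the case $\alpha^*=0$ is treated differently in the paper: one uses the weak reverse renewal theorem, Theorem~\ref{thm:AB}, which gives the asymptotics of the \emph{average} $\frac{1}{\gep_n n}\sum_{k=(1-\gep_n)n}^{n}\bP(\rho_1=k)\simn (U_n^*)^{-2}\bP(n\in\rho)$ directly, bypassing tail subtraction entirely, and then Lemma~\ref{lem:regvarying} upgrades this average to pointwise. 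Your argument for (i) and (iii) is safe precisely because there the tail is regularly varying with strictly negative index, so $\bP(\rho_1>n)-\bP(\rho_1>(1+\gep)n)$ is comparable to $\bP(\rho_1>n)$ itself and the $o(1)$ errors are harmless. You should replace the case (ii) argument by an invocation of Theorem~\ref{thm:AB}.

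A smaller point: your sentence that $v_n$ ``is of the same order as the claimed $\bP(\rho_1=n)$'' is phrased circularly (the claimed asymptotic is the thing being proved). What is actually needed, and what the paper proves as \eqref{eq:vn}, is $v_n\leq c_{10}A_n^\pm(\gep)$ with the right-hand side computed from Theorem~\ref{thm:Prhon2} (or from \eqref{Unrn} and Theorem~\ref{thm:AB} when $\alpha^*=0$), not from the conclusion of Theorem~\ref{thm:main2}.
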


\medskip
As in Theorem \ref{thm:Prhon2}, in (iii), $\tilde\mu_n,\mu_n$ may be replaced by $\mu$ or $\tilde\mu$ if that mean is finite.

We now illustrate this theorem with some subcases, using Table~\ref{table:Doney}.
\smallskip

{\bf 1.} If $\tau,\sigma$ are recurrent with $\ga,\tga\in(0,1)$ and $\ga+\tga>1$, then $\rho$ is recurrent with $\alpha^*=\ga+\tga-1\in(0,1)$ and
\begin{equation}
\label{gatga01}
 \bP(\rho_1=n) \stackrel{n\to\infty}{\sim} c_{\ga,\tga}\,    \gp(n) \tilde\gp(n) \, n^{-(\ga+\tga)} \qquad \text{with } c_{\ga,\tga} = \frac{\pi\alpha^* \sin(\pi\alpha^*)}{\ga\tga \sin(\pi \ga) \sin(\pi \tga)} \, .
\end{equation}

\smallskip
{\bf 2.} If $\tau,\sigma$ are recurrent with $\ga,\tga\in(0,1), \ga+\tga=1$ and $\sum_{n=1}^\infty 1/n\gp(n)\tgp(n)=\infty$, then $\rho$ is recurrent, $\alpha^*=0$, $\psi^*(n) \sim c'_{\ga,\tga} \gp(n)^{-1} \tgp(n)^{-1}$ with $c'_{\ga,\tga} = \frac{\ga\tga \sin(\pi \ga) \sin(\pi\tga)}{\pi^2} $. Therefore,
\begin{equation}
\bP(\rho_1=n) \simn \frac{1}{c'_{\ga,\tga}} \left(\sum_{k=1}^n \frac{1}{ k \gp(k)\tilde\gp(k)}\right)^{-2}  \frac{1}{n \gp(n)\tilde\gp(n)} \,  .
\end{equation}
As a special case, suppose $\tau = \{n, S_{2n} = 0\}$, $\sigma= \{n, S'_{2n}=0\}$ are the return times of independent symmetric simple random walks (SSRW) on $\mathbb{Z}$.  Then $\ga=\tga=1/2$ and $\gp(n)=\tgp(n)\to \frac{1}{2\sqrt{\pi}}$ so 
\begin{equation}\label{jain}
  \bP(\rho_1=n) \simn \frac{\pi}{n(\log n)^2}.
\end{equation}
Rotating the lattice by $\pi/4$ shows that this is the same as the return time distribution for $(\mathbf{S}_n)_{n\geq 0}$ the SSRW on $\mathbb{Z}^2$ (the \emph{even} return times: $\rho=\{n,\mathbf{S}_{2n} =0\}$). Hence \eqref{jain} is a classical result of Jain and Pruitt \cite{cf:JainPruitt}.

\smallskip
{\bf 3.}
If $\tau$ is recurrent with $\ga\in(0,1)$, and $\tga\geq 1$ (so $\tilde \mu_n$ is slowly varying; this includes the case when $\tilde\mu<+\infty$), then $\alpha^*=\ga$ and 
\begin{equation}
\label{ga01tga>1}
\bP(\rho_1 =n ) \stackrel{n\to\infty}{\sim} \tilde{\mu}_n\, \gp(n) n^{-(1+\ga)}  \stackrel{n\to\infty}{\sim} \tilde{\mu}_n\, \bP(\tau_1=n)  \, .
\end{equation}

\subsection{Application to a coupling-related quantity}
\label{sec:coupling}

We now provide an application of Theorem~\ref{thm:Prhon2}.
\begin{theorem}
\label{thm:un}
Let $\tau$ be a recurrent renewal process satisfying \eqref{def:alphas}, and let $u_n:=\bP(n\in\tau)$ be its renewal mass function. There exist constants $c_i>0$ such that
\begin{equation}\label{deltaun}
  | u_n - u_{n-1} |  \leq c_1\, u_n \bP(\rho_1>n) \leq \begin{cases} c_2\, n^{-1/2}\gp(n)^{-1}\left( \sum_{j=1}^n \frac{1}{j\gp(j)^2} \right)^{-1} 
  &\text{if } \ga=1/2\\ c_2\, n^{-\ga}\gp(n)^{-1} &\text{if } \ga>1/2. \end{cases}
  \end{equation}
\end{theorem}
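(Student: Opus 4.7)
I focus on the first, probabilistic inequality $|u_n-u_{n-1}|\leq c_1\,u_n\,\bP(\rho_1>n)$; the second inequality follows by inserting the asymptotics of $u_n$ from Table~\ref{table:Doney} and of $\bP(\rho_1>n)$ from Theorem~\ref{thm:Prhon2} (applied with $\tga=\ga$ and $\tgp=\gp$), handling $\ga>1/2$ via parts (i) or (iii) of Theorem~\ref{thm:Prhon2} and $\ga=1/2$ via part (ii) and \eqref{Prhon4}, together with Karamata's theorem.

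The strategy is a coupling argument. Let $\sigma'$ be an independent copy of $\tau$ starting at $0$, and let $\sigma:=1+\sigma'$ be the delayed version starting at $1$, so $\bP(n\in\sigma)=u_{n-1}$. With $T:=\min(\tau\cap\sigma)$, the sharpened coupling inequality recalled in Section~1 (using the renewal property at $T$ to see that the marginals agree on $\{T\leq n\}$) gives
\begin{equation*}
 |u_n-u_{n-1}|\;\leq\;\bE\bigl[|\ind_{n\in\tau}-\ind_{n\in\sigma}|\,\ind_{T>n}\bigr]\;=\;\bP(n\in\tau,\,T>n)+\bP(n\in\sigma,\,T>n),
\end{equation*}
the last step using that $\{n\in\tau\}\cap\{n\in\sigma\}\subseteq\{T\leq n\}$. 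By symmetry it suffices to bound $\bP(n\in\tau,T>n)$ by a constant multiple of $u_n\bP(\rho_1>n)$.

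Conditioning on $\tau$ and using the independence of $\tau$ and $\sigma'$,
\begin{equation*}
 \bP(n\in\tau,T>n)=\bE_\tau\bigl[\ind_{n\in\tau}\,\bP_{\sigma'}(\sigma'\cap B_\tau=\emptyset)\bigr],\qquad B_\tau:=\{k-1:k\in\tau\cap[1,n]\},
\end{equation*}
while $\bP(\rho_1>n,n\in\tau)=\bE_\tau[\ind_{n\in\tau}\,\bP_{\sigma'}(\sigma'\cap C_\tau=\emptyset)]$ with $C_\tau=B_\tau+1=\tau\cap[1,n]$. Thus the problem reduces to two estimates: (a)~a \emph{shift-comparison} $\bP_{\sigma'}(\sigma'\cap B_\tau=\emptyset)\leq c\,\bP_{\sigma'}(\sigma'\cap C_\tau=\emptyset)$, valid uniformly in $\tau$; and (b)~the \emph{negative-correlation} bound $\bP(n\in\tau,\rho_1>n)\leq u_n\,\bP(\rho_1>n)$, which expresses that $\ind_{n\in\tau}$ and the (decreasing-in-$\tau$) functional $\bP_{\sigma'}(\sigma'\cap C_\tau=\emptyset)$ are negatively correlated. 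Bound (b) can be obtained by conditioning on the last renewal of $\tau$ in $[0,n]$ and using the renewal property of $\tau$ from that point.

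The main obstacle is estimate (a), the shift comparison: the renewal $\sigma'$ must be shown to avoid $B_\tau$ with probability no larger than a universal constant times its probability of avoiding the unit-shifted set $C_\tau$. My plan is to prove it by a step-by-step exchange argument built on the residual-time Markov chain of $\sigma'$; one transition of that chain links the avoidance of a point $m$ to the avoidance of $m+1$, losing only a bounded multiplicative factor depending on $\gp$ and on $\ga\geq 1/2$. Iterating the substitution across all points of $B_\tau$ (using the strong renewal theorem for $\sigma'$ to keep the residual-time distribution uniformly controlled) produces the required $\tau$-independent constant $c$, and combining (a) and (b) yields the claim.
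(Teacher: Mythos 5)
Your opening is the same as the paper's: couple $\tau$ (started at $0$) with $\sigma=1+\sigma'$ (started at $1$) so that they coincide after $T=\min(\tau\cap\sigma)$, then bound $|u_n-u_{n-1}|$ by $\bP(n\in\tau,T>n)+\bP(n\in\sigma,T>n)$. After that you diverge, and I think your route has two serious gaps.

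Your step~(a), the uniform-in-$\tau$ shift comparison $\bP_{\sigma'}(\sigma'\cap B_\tau=\emptyset)\leq c\,\bP_{\sigma'}(\sigma'\cap C_\tau=\emptyset)$ with $B_\tau=C_\tau-1$, is the crux and is not established by your sketch. The set $C_\tau=\tau\cap[1,n]$ can contain $\Theta(n)$ points, so a ``step-by-step exchange'' that loses a multiplicative factor per point has no hope of producing a constant independent of $\tau$ and $n$; and I see no mechanism by which the strong renewal theorem lets the errors cancel rather than compound. Your step~(b), the exact negative-correlation bound $\bP(n\in\tau,\rho_1>n)\leq u_n\,\bP(\rho_1>n)$, is an FKG-type statement for the renewal point process that you assert but do not prove; the paper does \emph{not} prove this sharp inequality and only needs the much weaker Lemma~\ref{lem:appendix}, which gives $\bP(\rho_1>n/4\mid n\in\tau)\leq C_0\,\bP(\rho_1>n/4)$ after shrinking the window, by a direct computation conditioning on the last renewal before $n$. (Note also that for $\ga<1$ one has $\bP(n\in\tau,m\in\tau)=u_m u_{n-m}\geq u_mu_n$, i.e.\ \emph{positive} pairwise correlation, which should make you suspicious of a clean negative-association argument.)

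The paper sidesteps both problems by reversing the order of operations. It first applies Lemma~\ref{lem:appendix} to remove the conditioning on $n\in\tau$ at the cost of a constant $C_0$, reducing to bounding the unconditional $\bP_{0,1}(\rho_1>n/4)$. Then the comparison between the delayed law $\bP_{0,1}$ and the undelayed $\bP=\bP_{0,0}$ is done at the level of the single event $\{\rho_1>m\}$ rather than for arbitrary avoidance sets: choosing $k_0$ with $\bP(\tau_1=k_0)\bP(\tau_1=k_0+1)>0$ and forcing $\tau_1=k_0,\,\sigma_1=k_0+1$ gives $\bP(\rho_1>x+k_0)\geq \bP(\tau_1=k_0)\bP(\sigma_1=k_0+1)\,\bP_{0,1}(\rho_1>x)$, and then the regular variation of $\bP(\rho_1>n)$ from Theorem~\ref{thm:Prhon2} absorbs both the shift by $k_0$ and the loss from $n/4$ to $n$. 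This whole-event comparison is what makes the argument short; trying to do the comparison pointwise in $\tau$, as your plan requires, is where the difficulty concentrates. Your treatment of the second inequality in \eqref{deltaun}, plugging in Table~\ref{table:Doney} and Theorem~\ref{thm:Prhon2}, is fine and matches the paper.
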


\smallskip
Note that the right side of \eqref{deltaun} is of order $\bP(\tau_1>n)$ when $\ga>1/2$. It is summable precisely when $\mu=\bE[\tau_1]<+\infty$, and then, by Theorem \ref{thm:Prhon2}(iii), \eqref{deltaun} says $|u_n-u_{n-1}| \leq c_3\ \bP(\tau_1 >n)$. This gives additional information compared to the known asymptotics 
\[
  u_n-\frac1\mu \sim \frac{1}{\mu^2} \sum_{k>n} \bP(\tau_1>k)
  \]
from \cite{cf:Rogozin}.  We can sum \eqref{deltaun} to obtain $|u_n - 1/\mu| \leq c_3 \sum_{k>n} \bP(\tau_1>k)$, which is of the right order, but we cannot obtain the proper constant $1/\mu^2$.

We also mention the works of Topchii \cite{cf:Top1,cf:Top2}, treating the case when $\tau_1$ is a continuous random variable with $E[\tau_1]=\infty$ and density $f(t)\stackrel{t\to\infty}{\sim} \gp(t) t^{-(1+\ga)}$, studying the density $u(t)=\sum_{k=0}^{\infty} f^{*k}(t)$ $u(t)$ of the renewal function, and also $u'(t)$. Under some additional regularity conditions on $f'(t)$, letting $m(t):=\bE[\tau_1\wedge t]$, it is proven that
\[u'(t) \stackrel{t\to\infty}{\sim} 
\begin{cases}
 \frac{\ga (\ga-1) \sin(\pi \ga)}{\pi} \  \gp(t)^{-1}\, t^{-(2-\ga)} & \quad \text{ if } 0< \ga<1 ,\\
  \frac{1}{m(t)^2}\ \gp(t)\, t^{-1}  &  \quad \text{ if $\ga=1$ and } \bE[\tau_1]=\infty.
\end{cases}
\]
This is a better estimate than its analog in the infinite-mean case in Theorem \ref{thm:un}, but the techniques of \cite{cf:Top1,cf:Top2} do not appear adaptable to the discrete setting.

\begin{proof}[Proof of Theorem \ref{thm:un}.]
The second inequality in \eqref{deltaun} is a direct consequence of Theorem \ref{thm:Prhon2}(iii) and Table \ref{table:Doney}, so we prove the first one.
Take $\sigma$ a renewal process independent from $\tau$, with the same inter-arrival distribution, but starting from $\sigma_0=1$. We can couple $\tau$ and $\sigma$ so that $\tau=\sigma$ on $[\rho_1,\infty)$. Then denoting the corresponding joint distribution by $\bP_{0,1}$ we have
\begin{align*}
| u_n - u_{n-1} | &= \big| \bE_{0,1}[ \ind_{\{n\in\tau\}} - \ind_{\{n\in\sigma\}}] \big| \leq \bP_{0,1}(n\in\tau,\rho_1>n) +\bP_{0,1}(n\in \sigma,\rho_1>n).
\end{align*}
By Lemma \ref{lem:appendix} there is a constant $C_0$ such that
\[
  \bP_{0,1}(n\in\tau,\rho_1>n) \leq \bP_{0,1}(n\in\tau) \bP_{0,1}(\rho_1 >n/4 \mid n\in\tau) \leq C_0\, u_n \bP_{0,1}(\rho_1>n/4) ,
  \]
and similarly for $\bP_{0,1}(n\in \sigma,\rho_1>n)$, since $u_{n-1} \sim u_n$. Now, fix $k_0$ such that $\bP(\tau_1=k_0+1)\bP(\tau_1=k_0) >0$, and observe that for any $x>0$
\[
  \bP(\rho_1> x+k_0) \geq \bP(\sigma_1=k_0+1)\bP(\tau_1=k_0) \bP_{0,1}(\rho_1 > x).
\]
Since $\bP(\rho_1>n)$ is regularly varying (cf.~Theorem \ref{thm:Prhon2}), it follows that there is a constant $c_4 >0$ such that $\bP_{0,1}(\rho_1>n/4) \leq c_4 \bP(\rho_1 >n)$, and hence Theorem \ref{thm:un} follows.
\end{proof}

\subsection{Organization of the rest of the paper and idea of the proof}

First of all, we recall renewal and reverse renewal theorems in Section \ref{sec:renewal}, which are used throughout the paper.

\smallskip
Sections \ref{sec:alpha*pos}--\ref{sec:alpha*0inf} are devoted to the proof of Theorem \ref{thm:Prhon2}. Items (i)-(ii) are dealt with using Theorem 8.7.3 in \cite{cf:BGT}, and our main contribution is the proof of item (iii). 
The underlying idea is that, in order to have $\{\rho_1>n\}$ either one of $\tau$ or $\sigma$ typically makes a jump of order at least $n$. We decompose $\bP(\rho_1>n)$ according to the number $k$ of steps before $\tau$ (resp. $\sigma$) \emph{escapes} beyond $n$ by a jump larger than $(1-\gep)n$: we find that the expected number of steps is approximately $\tilde \mu_n$ (resp. $\mu_n$), giving Theorem \ref{thm:Prhon2}(iii).

\smallskip
Sections \ref{sec:regvary}--\ref{sec:regvarying} contain the proof of Theorem \ref{thm:main2}. In Section \ref{sec:regvary}, we prove Lemma~\ref{lem:regvarying} in two steps. First, we show that when $\rho_1=n$, having only gaps of length $\leq \gd n$ is very unlikely ; then, given that there is, say in $\tau$, a gap larger than $\gd n$, we can stretch it (together with associated $\sigma$ intervals) by $k \ll \gd n$ at little cost: this proves that $\bP(\rho_1 = n) \approx \bP(\rho_1=n+k)$.
In Section \ref{sec:regvarying}, we conclude the proof of Theorem \ref{thm:main2} by combining Lemma \ref{lem:regvarying} with Theorem \ref{thm:Prhon2}.

\section{Background on renewal and reverse renewal theorems}
\label{sec:renewal}

We consider a renewal $\tau=\{\tau_0,\tau_1,\dots\}$, with $\tau_0=0$. 
The corresponding \emph{renewal mass function} is $\bP(n\in\tau), n\geq 0$.

\subsection{On renewal theorems}

In what follows we assume that the inter-arrival distribution of $\tau$ satisfies \eqref{def:alphas}.

\subsubsection*{Transient case}
If $\tau$ is transient, then (see \cite[App. A.5]{cf:Giac}) 
\begin{equation}
\label{eq:transient}
\bP(n\in\tau) \simn \frac{1}{(p_\infty^\tau)^2} \bP(\tau_1=n) \, ,
\end{equation}
where $p_\infty^\tau:= \bP(\tau_1=+\infty)\in(0,1)$.

\subsubsection*{Recurrent case}
Here there are multiple subcases, as follows.

\smallskip

\textbullet\ If $\bE[\tau_1]<+\infty$, then the classical Renewal Theorem says
\begin{equation}
\lim_{n\to\infty}  \bP(n\in\tau) =  \frac{1}{\bE[\tau_1]}.
\end{equation}

\smallskip

\textbullet\ If $\ga=1$, $\bE[\tau_1]=+\infty$, then from \cite[eq. (2.4)]{cf:Eric},
\begin{equation}
\label{Doney1}
\bP(n\in\tau) \stackrel{n\to\infty}{\sim} (\mu_n)^{-1}\, ,
\end{equation}
where $\mu_n:= \bE(\tau_1\wedge n) $ is slowly varying.

\smallskip
\textbullet\ If $\ga\in(0,1)$ then by \cite[Thm. B]{cf:Doney}, 
\begin{equation}
\label{Doney}
\bP(n\in\tau) \stackrel{n\to\infty}{\sim} \frac{\ga \sin(\pi \ga)}{\pi}\ n^{-(1-\ga)} \gp(n)^{-1}\, .
\end{equation}
(Note that there is a typo in \cite[Eq. (1.8)]{cf:Doney}.)

\smallskip
\textbullet\ If $\ga=0$, then from \cite[Thm. 1.2]{cf:ABzero}, 
\begin{equation}
\label{alpha0}
\bP(n\in\tau) \stackrel{n\to\infty}{\sim} \frac{\bP(\tau_1=n)}{ \bP(\tau_1>n)^2}\, .
\end{equation}

We recall that the results in the case of a recurrent $\tau$ are collected in Table~\ref{table:Doney}.

\subsection{On reverse renewal theorems}

In the opposite direction, if in place of \eqref{def:alphas}, one assumes that $\bP(n\in\tau)$ is regularly varying with exponent $1-\ga$, then for $0\leq \alpha<1$ the asymptotics of $\bP(\tau_1> n)$ follow from \cite[Thm. 8.7.3]{cf:BGT}. It is not possible in general to deduce the asymptotics of $\bP(\tau_1=n)$, which need not even be regularly varying. However, in certain cases, one can recover at least some behavior of $\bP(\tau_1=n)$ from that of $\bP(n\in\tau)$ when the latter is regularly varying; we call such a result a \emph{reverse renewal theorem}. Specifically, if the renewal function
\[
  U_n:=\sum_{k=0}^n \bP(k\in\tau), \quad n\leq\infty,
\]
is slowly varying (as happens in the case of transient $\tau$ or $\alpha=0$), the following theorems apply.

\smallskip
\subsubsection*{Transient case}
We write $|\tau|$ for $|\{\tau_0,\tau_1,\dots\}|$, which is geometrically distributed in the transient case, with $E(|\tau|)=1/p_\infty^\tau$.

\begin{theorem}[Theorem 1.4 in \cite{cf:ABzero}]
\label{thm:transient}
If $\bP(n\in\tau)$ is regularly varying and $\tau$ is transient, then
\[\bP(\tau_1 = n) \simn \frac{1}{\bE(|\tau|)^2} \, \bP(n\in\tau)\, .\]
\end{theorem}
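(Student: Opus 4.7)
Denote $f(n):=\bP(\tau_1=n)$ and $p:=p_\infty^\tau\in(0,1)$, $q:=1-p$, so that $f$ is defective on $\N$ with total mass $q$ and $\bE(|\tau|)=1/p$. The plan is to start from the renewal representation
\[
  u(n):=\bP(n\in\tau)=\sum_{k\geq 0} f^{*k}(n)
\]
and exploit the one-big-jump philosophy: for heavy-tailed step laws, the dominant contribution to $u(n)$ for large $n$ should come from configurations containing a single step of size $\simeq n$. Writing $\tilde f:=f/q$, this heuristic predicts $f^{*k}(n)\sim kq^{k-1}f(n)$ for each fixed $k\geq 1$, which summed over $k$ gives the target asymptotics $u(n)\sim f(n)\sum_{k\geq 1}kq^{k-1}=f(n)/(1-q)^2=f(n)\,\bE(|\tau|)^2$.

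The main obstacle I anticipate is the first step: promoting the regular variation of $u$ to regular variation (with the same index) of $f$ itself. Indeed, the one-big-jump heuristic requires $\tilde f$ to be locally subexponential, which is not directly assumed. I would approach this analytically via the generating-function identity $\hat F(z)=1-1/\hat U(z)$: since $\hat U(1)=1/p<\infty$, a Tauberian / convolution-root argument inverts this to show that $f(n)$ is regularly varying with the same index as $u(n)$, and that $\tilde f$ is locally subexponential. A more hands-on alternative uses the renewal equation $u(n)=\sum_{k=1}^n f(k)u(n-k)$: under the ansatz $f(n)\sim c\,u(n)$, the short-jump part $k\leq M$ contributes $\sim q\,u(n)$ while the long-jump part $k\geq n-M$ contributes $\sim c\,\bE(|\tau|)\,u(n)$, yielding the consistency equation $q+c\,\bE(|\tau|)=1$ and forcing $c=p/\bE(|\tau|)=1/\bE(|\tau|)^2$; a $\limsup/\liminf$ argument along subsequences, together with standard control of the middle-range contribution, can then make this rigorous.

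With regular variation of $f$ available, I would apply the local one-big-jump principle, giving $\tilde f^{*k}(n)\sim k\,\tilde f(n)$ for each fixed $k\geq 1$, hence $f^{*k}(n)\sim kq^{k-1}f(n)$. A Kesten-type uniform inequality $\tilde f^{*k}(n)\leq C_\varepsilon(1+\varepsilon)^k\tilde f(n)$, valid for any $\varepsilon>0$ thanks to local subexponentiality, provides a dominated-convergence bound summable in $k$ as soon as $\varepsilon<p/q$. Summing the pointwise asymptotics over $k$ then yields
\[
  u(n)=\sum_{k\geq 1} f^{*k}(n)\sim f(n)\sum_{k\geq 1} kq^{k-1}=f(n)\,\bE(|\tau|)^2,
\]
and inversion gives the claimed $\bP(\tau_1=n)\sim \bP(n\in\tau)/\bE(|\tau|)^2$.
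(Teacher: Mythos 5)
The paper does not prove this theorem; it is quoted verbatim from \cite{cf:ABzero} (Theorem 1.4 there), so there is no in-paper proof to compare against. Judged on its own merits, your sketch gets the \emph{forward} direction right: once one knows $\tilde f=f/q$ is locally subexponential, the one-big-jump estimate $f^{*k}(n)\sim kq^{k-1}f(n)$ together with a Kesten-type uniform bound does give $u(n)\sim\bE(|\tau|)^2 f(n)$. The difficulty, which you correctly flag, is the \emph{reverse} step: we are given regular variation of $u$ and must recover the local behaviour of $f$. Your two proposed resolutions of this step both have gaps. The ``Tauberian / convolution-root'' route is not carried out, and the analogy is off: this is a convolution-\emph{inverse} problem ($\hat F=1-1/\hat U$), not a convolution-root problem of Embrechts--Goldie type, and passing \emph{local} asymptotics through such an inversion is exactly the kind of step that fails in general --- which is why in the recurrent case Theorem~\ref{thm:AB} only delivers a Ces\`aro-averaged statement.

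The renewal-equation bootstrap is more concrete, but it closes only when $p:=p_\infty^\tau>1/2$. Set $c^+=\limsup_n f(n)/u(n)$ and $c^-=\liminf_n f(n)/u(n)$, with $q:=1-p$. Starting from $f(n)=u(n)-\sum_{j=1}^{n-1}f(j)u(n-j)$ and carrying out the three-range split you describe (small $j$ contributes $\sim qu(n)$; middle range negligible by summability of $u$; $j$ near $n$ bounded through $c^\pm$ using $\sum_{m\geq 1}u(m)=q/p$), one obtains exactly
\[
c^-\geq p-\tfrac{q}{p}\,c^+,\qquad c^+\leq p-\tfrac{q}{p}\,c^-.
\]
Subtracting gives $(c^+-c^-)(1-q/p)\leq 0$, which forces $c^+=c^-=p^2$ only when $q/p<1$, i.e.\ $p>1/2$. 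When $\bE(|\tau|)\geq 2$ the implied contraction constant $q/p$ is $\geq1$ and the self-improvement does not close; equivalently, the series obtained by iterating the renewal equation, $f(n)=\sum_{k\geq 1}(-1)^{k-1}w^{*k}(n)$ with $w(n)=u(n)\ind_{\{n\geq 1\}}$, fails to converge absolutely since $\|w\|_1=q/p\geq 1$. To cover the whole range $p\in(0,1)$ one needs extra input of a genuinely probabilistic kind --- for instance a stretching argument showing $f$ is nearly constant on windows of length $o(n)$, in the spirit of Section~\ref{sec:regvary} of this paper, or a decomposition of $\{n\in\tau\}$ according to the location of the largest gap --- and your sketch does not supply it.
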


\smallskip
\subsubsection*{Recurrent case}
If $U_n$ is growing to infinity as a slowly varying function, then we have only a weaker reverse renewal theorem corresponding to \eqref{alpha0}.
\begin{theorem}[Theorem 1.3 in \cite{cf:ABzero}]
\label{thm:AB}
If $\bP(n\in\tau)$ is regularly varying, and if $U_n$ is slowly varying, then there exists some $\gep_n \stackrel{n\to\infty}{\to} 0$ such that
\[ \frac{1}{\gep_n n} \sum_{k=(1-\gep_n)n}^n \bP(\tau_1=k) \simn (U_n)^{-2} \bP(n\in\tau)\, .\]
\end{theorem}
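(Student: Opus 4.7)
The plan is to reduce the averaged statement to the pointwise identity $r_n U_n \to 1$ (with $r_n:=\bP(\tau_1>n)$), together with the trivial telescoping $\sum_{k=m+1}^n \bP(\tau_1=k) = r_m - r_n$. Note that the pointwise version $\bP(\tau_1=n) \sim u_n/U_n^2$ would follow at once from combining Theorem 1.2 of \cite{cf:ABzero} ($u_n \sim \bP(\tau_1=n)/r_n^2$) with $r_n \sim 1/U_n$; however this pointwise statement is false in general since $\bP(\tau_1=n)$ need not be regularly varying, and averaging over $[(1-\gep_n)n, n]$ with $\gep_n \to 0$ smooths this out.

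First I would establish $r_n U_n \to 1$. By Karamata--Tauberian, slow variation of $U_n$ gives $\hat u(s) \sim U(1/(1-s))$ as $s \uparrow 1$. Using the renewal equation $\hat u(s) = 1/(1-\hat f(s))$ together with the Abel identity $1 - \hat f(s) = (1-s) \hat r(s)$, where $\hat r(s) := \sum_k r_k s^k$, one obtains $\hat r(s) \sim 1/\bigl((1-s)\, U(1/(1-s))\bigr)$. A second Tauberian step yields $\mu_n := \sum_{k<n} r_k \sim n/U_n$. Karamata's monotone density theorem, applied to the nonincreasing sequence $r_k$ with its regularly varying (index $1$) partial sums $\mu_n$, then gives $r_n \sim 1/U_n$.

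Next I would exploit the telescoping identity. For any sequence $\gep_n \to 0$ and $m_n := \lfloor (1-\gep_n)n \rfloor$,
\[
\sum_{k=m_n+1}^n \bP(\tau_1=k) = r_{m_n} - r_n \sim \frac{1}{U_{m_n}} - \frac{1}{U_n} = \frac{U_n - U_{m_n}}{U_{m_n} U_n}.
\]
Slow variation of $U_n$ gives $U_{m_n} U_n \sim U_n^2$. Regular variation of $u_k$ of index $-1$ (write $u_k = L(k)/k$) gives $U_n - U_{m_n} \sim L(n)\bigl(-\log(1-\gep_n)\bigr) \sim \gep_n n u_n$ as $\gep_n \to 0$. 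Combining, $\tfrac{1}{\gep_n n} \sum_{k=m_n+1}^n \bP(\tau_1=k) \sim u_n/U_n^2$. A standard diagonal extraction then produces a specific $\gep_n \to 0$ along which all these asymptotics hold uniformly in $n$.

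The main obstacle is Step 1, the identity $r_n U_n \to 1$: it requires chaining two applications of Karamata--Tauberian with the monotone density theorem, and implicitly demands that $r_n$ inherit enough regularity from $u_n$ (the slow variation of $r_n$ is not a hypothesis but must be deduced via the renewal structure). Once this identity is in hand, Step 2 and the diagonal argument reduce to standard manipulations of slowly and regularly varying functions, with the factor $-\log(1-\gep_n)/\gep_n \to 1$ (as $\gep_n \to 0$) accounting for the choice of averaging scale.
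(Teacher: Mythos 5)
First, note that the paper does not prove Theorem~\ref{thm:AB}: it is cited as Theorem~1.3 of the preprint \cite{cf:ABzero}, so there is no internal proof to compare with. Your Step~1 (deriving $r_n U_n\to 1$ via Karamata--Tauberian and the monotone density theorem) is correct and is essentially \cite[Thm.~8.7.3]{cf:BGT}, which the paper already invokes.

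The genuine gap is the step $r_{m_n}-r_n\sim\frac{1}{U_{m_n}}-\frac{1}{U_n}$, which attempts to \emph{difference} the pointwise asymptotic $r_n\sim1/U_n$. Writing $r_n=(1+\eta_n)/U_n$ with $\eta_n\to0$, one has
\[
r_{m_n}-r_n=\Big(\tfrac{1}{U_{m_n}}-\tfrac{1}{U_n}\Big)+\Big(\tfrac{\eta_{m_n}}{U_{m_n}}-\tfrac{\eta_n}{U_n}\Big).
\]
The target $\frac{1}{U_{m_n}}-\frac{1}{U_n}$ is of order $\gep_n L(n)/U_n^2$, hence $o(1/U_n)$ (since $L(n)/U_n\to 0$: slow variation of $U_n$ gives $U_{2n}-U_n\sim L(n)\log 2=o(U_n)$), while the error term is only known to be $o(1/U_n)$ with no rate; nothing in the hypotheses prevents $\eta_n$ from decaying so slowly that the error swamps the target. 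Diagonal extraction cannot repair this, because even the fixed-$\gep$ version fails: both $r_n$ and $1/U_n$ are slowly varying, so $r_{(1-\gep)n}-r_n$ is a difference of asymptotically equal quantities and is undetermined by $r_n\sim1/U_n$ alone. The Tauberian route can be rescued by working one derivative higher: from $\hat u(s)=1/(1-\hat f(s))$ one has $\hat f'(s)=\hat u'(s)/\hat u(s)^2$, and Karamata--Tauberian applied to the nonnegative coefficients $u_n$ and $n u_n=L(n)$ gives $\hat u(s)\sim U(1/(1-s))$ and $\hat u'(s)\sim L(1/(1-s))/(1-s)$, hence $\hat f'(s)\sim L(1/(1-s))\big/\big[(1-s)\,U(1/(1-s))^2\big]$. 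Tauberian on the nonnegative coefficients $n\bP(\tau_1=n)$ then yields $\sum_{k\le n}k\bP(\tau_1=k)\sim n L(n)/U_n^2$, which is regularly varying of index~$1$ rather than~$0$. Differencing a regularly varying sequence of \emph{positive} index over a fixed ratio $(1-\gep):1$ \emph{is} legitimate, so for each fixed $\gep$ one gets $\sum_{(1-\gep)n<k\le n}k\bP(\tau_1=k)\sim\gep\, n L(n)/U_n^2$; dividing by $n(1+O(\gep))$ to convert $\sum k\bP(\tau_1=k)$ into $\sum\bP(\tau_1=k)$ and then running your diagonal extraction over $\gep$ completes the argument.
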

One can therefore obtain the local asymptotics of $\bP(\tau_1=n)$ from this last theorem when one can show $\bP(\tau_1=n)$ is approximately constant over an interval of length $o(n)$, as done in Lemma \ref{lem:regvarying}.

\section{ Proof of Theorem \ref{thm:Prhon2}\lowercase{(i), (ii)} }
\label{sec:alpha*pos}
In case (i) we have $U_n^* \sim \frac{1}{\ga^*} \psi^*(n)n^{\alpha^*}$, and in case (ii) $U_n^* = \sum_{j=1}^n \frac{\psi^*(j) }{ j}$ which is slowly varying.
Hence by \cite[Thm.\ 8.7.3]{cf:BGT}, in case (i),
\[
  \bP(\rho_1>n) \simn \frac{1}{\Gamma(1-\alpha^*)\Gamma(1+\alpha^*)} \frac{1}{U_n^*} \simn 
    \frac{\sin(\pi\alpha^*)}{\pi} \psi^*(n)^{-1}\, n^{-\alpha^*},
\]
and in case (ii),
\begin{equation}\label{Unrn}
  \bP(\rho_1>n) \simn  \frac{1}{U_n^*} = \left(  \sum_{j=1}^n \frac{\psi^*(j)}{j} \right)^{-1}.
\end{equation}

\section{ Proof of Theorem \ref{thm:Prhon2}(iii) }
\label{sec:alpha*0inf}

For $\ga^*\geq 1$ (i.e.\ $\ga\geq 1$), we cannot extract  the behavior of $\bP(\rho_1>n)$ directly from that of $U_n^*$ as in Section \ref{sec:alpha*pos}, and we need a preliminary result: we prove that $\bP(\rho_1>n)$ is regularly varying and hence for any $\gep>0$ we have
\begin{equation}\label{gepeffect}
\bP(\rho_1>\gep n) = O(\bP(\rho_1>n)) \quad \text{as } n\to\infty.
\end{equation}

In Section \ref{sec:gepeffectinf}, we prove \eqref{gepeffect}, with the help of \cite{cf:Frenk}.
In Section \ref{sec:upperbound}, we prove an upper bound for $\bP(\rho_1>n)$.
Finally, in Section \ref{sec:lowerbound}, we prove the corresponding lower bound.

\subsection{Proof of \eqref{gepeffect}}
\label{sec:gepeffectinf}

A sequence $\{u_n\}$ is said to be in the \emph{de Haan class} $\Pi$ if there exists a slowly varying sequence $\ell_n$ such that for all $\lambda>0$,
\[
  \frac{u_{\lfloor\lambda n\rfloor}-u_n}{\ell_n} \to \log \lambda\quad \text{as } n\to\infty.
\]
We write $\mathrm{RVS}_{-\ga}$ for the set of regularly varying sequences  of index $-\ga$. We can state the results of Frenk \cite{cf:Frenk} as follows.
\begin{proposition}[\cite{cf:Frenk}, main theorem and Lemma 4]
\label{prop:Frenk} Let $\nu$ be a renewal process, and denote $u_n=\bP(n\in\nu)$. Then, we have
\begin{equation}\label{Frenk1}
\bP(\nu_1>n) \in \mathrm{RVS}_{-1} \quad \Leftrightarrow \quad u_n\in \Pi \,  .
\end{equation}
Moreover, for any $\ga>1$, denoting $m=\bE[\nu_1]<+\infty$, we have
\begin{equation}\label{Frenk2}
\bP(\nu_1>n) \in \mathrm{RVS}_{-\ga} \quad \Leftrightarrow \quad  u_n-\frac{1}{m} \in \mathrm{RVS}_{1-\ga}\, , 
\end{equation}
and each implies that
\begin{equation}\label{unerror}
  u_n-\frac{1}{m} \simn \frac{1}{m^2(\ga-1)}\, n \bP(\nu_1>n) \, .
  \end{equation}
\end{proposition}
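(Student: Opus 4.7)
The result is attributed to Frenk \cite{cf:Frenk}, so in practice one simply cites it; my plan to rederive it would combine Rogozin's identity with Karamata's Tauberian theorem. For the forward direction of (ii) and the asymptotic \eqref{unerror}, assume $\bP(\nu_1>n) \sim L(n) n^{-\ga}$ with $\ga > 1$. The cleanest input is Rogozin's result $u_n - 1/m \sim \frac{1}{m^2} \sum_{k>n} \bP(\nu_1>k)$ already recalled in the introduction of the present paper \cite{cf:Rogozin}. Karamata's theorem gives $\sum_{k>n} \bP(\nu_1>k) \sim \frac{1}{\ga-1}\, n \bP(\nu_1>n)$; plugging in yields \eqref{unerror} directly, and in particular $u_n - 1/m \in \mathrm{RVS}_{1-\ga}$.

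For the reverse direction of (ii), assume $u_n - 1/m \in \mathrm{RVS}_{1-\ga}$. Set $G(s) := \sum_{n\geq 0} \bP(\nu_1>n) s^n$ and $U(s) := 1/((1-s) G(s))$. A direct computation, starting from the standard identity $1-F(s) = (1-s)G(s)$, gives
\[
U(s) - \frac{1}{m(1-s)} \;=\; \frac{m - G(s)}{m(1-s) G(s)} \;=\; \frac{1}{m\, G(s)} \sum_{k \geq 0} \tilde g_k\, s^k, \qquad \tilde g_k := \sum_{j>k} \bP(\nu_1>j).
\]
Karamata's Tauberian theorem applied to the left-hand side, combined with $G(s)\to m$ as $s \uparrow 1$, forces $\tilde g_k \in \mathrm{RVS}_{1-\ga}$. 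Since $g_n := \bP(\nu_1>n)$ is monotone nonincreasing and $g_n = \tilde g_{n-1} - \tilde g_n$, the monotone density theorem converts regular variation of $\tilde g_n$ into $g_n \in \mathrm{RVS}_{-\ga}$, and matching asymptotics with the forward direction recovers the precise constant in \eqref{unerror}.

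Part (i) proceeds by a formally analogous argument, with $\ga=1$ and $m$ possibly infinite: one replaces $m$ by the truncated mean $\mu_n := \bE[\nu_1 \wedge n]$, which is itself slowly varying, and the de Haan class $\Pi$ enters as the Tauberian image of $\mathrm{RVS}_{-1}$ for the tail, via the same generating-function identity above. The main obstacle I foresee is precisely the reverse direction of (ii): passing from regular variation of $\tilde g_n$ back to that of $g_n$ depends crucially on the monotonicity of $g_n$, via the monotone density theorem, and would fail for a generic nonmonotone sequence. A secondary subtlety arises for $\ga \geq 2$, when $u_n - 1/m$ is summable and $\sum (u_n - 1/m) s^n$ remains finite at $s=1$; here Karamata's Tauberian theorem must be applied to a differenced or integrated version of the generating-function identity rather than to the identity itself.
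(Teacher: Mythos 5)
The paper itself offers no proof of this proposition: it is stated as a citation to Frenk's main theorem and Lemma~4 and used as a black box, so there is no in-paper argument to compare against, and your decision to treat it as a cited input is exactly what the authors do. Your sketch of a rederivation is a reasonable roadmap and correctly flags the two genuine subtleties: the monotone density theorem (applied to the nonincreasing $g_n = \tilde g_{n-1}-\tilde g_n$, and once more to $\tilde g_n$ itself) is what rescues the reverse implication, and for $\ga\ge 2$ the series $\sum(u_n-1/m)s^n$ and $\sum \tilde g_j s^j$ stay finite at $s=1$ so one must difference before invoking Karamata.

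Two small imprecisions worth fixing if you were to write this out. First, in the reverse direction you say ``Karamata's Tauberian theorem applied to the left-hand side'': the correct sequence of steps is the \emph{Abelian} half of Karamata on $\sum_n(u_n-1/m)s^n$ (using the assumed regular variation of $u_n-1/m$, which in particular presupposes $u_n-1/m$ eventually of one sign — this is implicit in $\mathrm{RVS}_{1-\ga}$ but should be said, since $u_n-1/m$ is not monotone), followed by the \emph{Tauberian} half on the right-hand series $\sum_j\tilde g_js^j$, which \emph{is} monotone. Second, for part (i) the phrase ``replace $m$ by the truncated mean $\mu_n$'' only pertains to the subcase $m=\infty$; \eqref{Frenk1} also covers $\ga=1$ with $m<\infty$, and the clean formulation in both subcases is via the de Haan Tauberian theorem relating $\Pi$-variation of $U(s)$ or $u_n$ to $\mathrm{RVS}_{-1}$ of the tail (BGT, Ch.~3), which is the tool you are gesturing at but not naming. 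None of this is a flaw in the strategy; it is detail that would have to be supplied.
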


Using Proposition \ref{prop:Frenk}, we prove that $\bP(\rho_1>n)$ is regularly varying with exponent $-\ga$, as follows, yielding \eqref{gepeffect}.

\medskip
If $\ga=\tga=1$, then Proposition \ref{prop:Frenk} tells that the slowly varying sequences $u_n=\bP(n\in\tau),\tilde{u}_n=\bP(n\in\sigma)$ are both in $\Pi$, with some corresponding slowly varying sequences $\ell_n,\tilde\ell_n$. (One expects $\ell_n\sim \gp(n)$ but we do not have or need proof of this.) Therefore, letting $L_n := \tilde\ell_nu_n + \ell_n\tilde{u}_n$, the product sequence $\bP(n\in\rho)=u_n\tilde{u}_n$ satisfies 
\begin{align}\label{deHaanprod}
 \frac{ u_{\lfloor\lambda n\rfloor}\tilde{u}_{\lfloor\lambda n\rfloor} - u_n\tilde{u}_n }{L_n} =  
   \frac{ u_{\lfloor\lambda n\rfloor} }{u_n} \frac{ \tilde{u}_{\lfloor\lambda n\rfloor} -\tilde{u}_n }{\tilde\ell_n} \frac{\tilde\ell_nu_n}{L_n}
   + \frac{ u_{\lfloor\lambda n\rfloor} - u_n }{\ell_n} \frac{ \ell_n\tilde{u}_n }{L_n} \stackrel{n\to\infty}{\to} \log \lambda
\end{align}
for all $\lambda>0$, so the product sequence is in $\Pi$. Applying Proposition \ref{prop:Frenk} again, we see that $\bP(\rho_1>n)$ is regularly varying with index $-1$.

\medskip

If $\ga=1$, $\tga>1$, then $\{u_n\}$ is in $\Pi$ (with some corresponding slowly varying sequence $\ell_n$), and $\tilde u_n - \frac{1}{\tilde\mu}$ is regularly varying with index $1-\tga$. Hence,
\begin{align*}
\frac{ u_{\lfloor\lambda n\rfloor}\tilde{u}_{\lfloor\lambda n\rfloor} - u_n\tilde{u}_n }{\mu^{-1}\ell_n} = \frac{\tilde u_{\lfloor\lambda_n \rfloor} }{\mu^{-1}} \frac{u_{\lfloor \gl n\rfloor } - u_n  }{\ell_n} + u_n \frac{\tilde u_{\lfloor \gl n\rfloor } -\tilde u_n }{\mu^{-1}\ell_n} \stackrel{n\to\infty}{\to} \log \gl ,
\end{align*}
where we used that $\tilde u_{\lfloor \gl n\rfloor } -\tilde u_n $ is in $\mathrm{RVS_{1-\tga}}$ so that the second term in the sum goes to $0$ (since $u_n /\ell_n$ is regularly varying with index $0$).
Hence $\bP(n\in\rho)=u_n\tilde{u}_n$ is in $\Pi$, and applying Proposition \ref{prop:Frenk}, we get that $\bP(\rho_1>n)$ is regularly varying with index $-1$.

\medskip

If $1<\ga\leq \tga$, then using Proposition \ref{prop:Frenk}, we get that
\begin{align}
\label{eq:Frenk}
u_n \tilde u_n - \frac{1}{\mu \tilde\mu} & = \Big(\frac{1}{\mu} + \frac{1+o(1)}{\mu^2(\ga-1)} n\bP(\tau_1>n) \Big) \Big(\frac{1}{\tilde\mu} +  \frac{1+o(1)}{\tilde\mu^2(\ga-1)} n\bP(\sigma_1>n) \Big) - \frac{1}{\mu\tilde\mu} \notag\\
&= \frac{1+o(1)}{\tilde\mu \mu^2 (\tga-1)} \, n \bP(\tau_1>n)  +  \frac{1+o(1)}{\mu \tilde\mu^2(\tga-1)} \,n\bP(\sigma_1>n), 
\end{align}
and therefore $u_n \tilde u_n - \frac{1}{\mu \tilde\mu} \in \mathrm{RVS}_{1-\ga}$. 
Applying Proposition \ref{prop:Frenk} again, we get that $\bP(\rho_1>n)$ is regularly varying with index $-\ga$ , and so \eqref{gepeffect} is proven.
Proposition~\ref{prop:Frenk} and \eqref{eq:Frenk} further give that
\begin{align}
\label{Prhon:Frenk}
\bP(\rho_1>n) & = (1+o(1)) \frac{1}{n}(\mu \tilde\mu)^2 (\ga-1) \Big( u_n \tilde u_n - \frac{1}{\mu\tilde\mu} \Big) \notag \\
& = (1+o(1)) \tilde \mu  \bP(\tau_1>n)  +   (1+o(1)) \mu \frac{\ga-1}{\tga-1} \bP(\sigma_1>n)
\end{align}
The second term is negligible compared to the first if $\tga>\ga>1$, so this proves Theorem~\ref{thm:Prhon2}(iii) when $1<\ga\leq \tga$.

We will present the rest of our proof of Theorem \ref{thm:Prhon2} in the whole range $1\leq \ga\leq \tga$ even though it is now needed only for $\ga=1$; this adds no complexity.  The advantage is that it is a more probabilistic approach, in that we use Proposition \ref{prop:Frenk} only to get the regular variation of $\bP(\rho_1>n)$, and avoid using the un-probabilistic \eqref{unerror} (with $\nu=\rho$) to estimate $\bP(\rho_1>n)$ as in \eqref{Prhon:Frenk}.
The method also provides an interpretation of the terms $\mu_n,\tilde\mu_n$ appearing in Theorem~\ref{thm:Prhon2}(iii).

\subsection{Some useful preliminary lemmas}

Before we prove Theorem \ref{thm:Prhon2}(iii), we need two technical lemmas. 

\begin{lemma}\label{lem:EK}
Let $\tau,\sigma$ be independent renewal processes, suppose $\rho=\tau\cap\sigma$ is recurrent with $\bE(\sigma_1)<\infty$, and let $K:=\min\{k\geq 1 : \tau_k \in\sigma\}$.  Then $\bE(K)=\bE(\sigma_1)$.
\end{lemma}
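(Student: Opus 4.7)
The approach I would take is to realize $K$ as the first return time to $0$ of a suitable Markov chain and then apply Kac's formula. To this end, let $B_n := \min\{s \in \sigma : s \geq n\} - n$ be the forward recurrence time of $\sigma$ at $n$, so that $n \in \sigma$ iff $B_n = 0$, and set $X_k := B_{\tau_k}$. Then $X_0 = 0$ (since $\tau_0 = 0 \in \sigma$),
\[
  K \;=\; \min\{k \geq 1 : X_k = 0\},
\]
and $(X_k)_{k \geq 0}$ is a Markov chain on $\{0, 1, 2, \ldots\}$: the independence of $\tau$ and $\sigma$, together with the (strong) renewal property of $\sigma$ applied at its renewal sitting at $\tau_k + X_k$, imply that $X_{k+1}$ is determined by $X_k$, an iid copy of $\tau_1$, and fresh iid $\sigma$-gaps, independently of anything earlier.

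The key step is to identify the equilibrium excess distribution of $\sigma$,
\[
  \pi(j) \;:=\; \frac{\bP(\sigma_1 > j)}{\bE[\sigma_1]}, \qquad j \geq 0,
\]
as a stationary distribution for $(X_k)$. Note that $\pi$ is indeed a probability measure, since $\sum_{j \geq 0} \bP(\sigma_1 > j) = \bE[\sigma_1] < \infty$, and that $\pi(0) = 1/\bE[\sigma_1]$. I would prove stationarity via the Palm-delayed version $\widetilde\sigma$ of $\sigma$, i.e.\ the renewal process whose first inter-arrival has distribution $\pi$ and whose subsequent inter-arrivals are iid copies of $\sigma_1$: it is classical that for such $\widetilde\sigma$ the forward recurrence time $\widetilde B_n$ has distribution $\pi$ at every integer $n \geq 0$. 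Hence, by independence of $\tau$ and $\widetilde\sigma$, the chain $\widetilde X_k := \widetilde B_{\tau_k}$ satisfies $\widetilde X_k \sim \pi$ for every $k$. Since $(\widetilde X_k)$ and $(X_k)$ share the same Markov transition kernel (both are driven by iid copies of $\tau_1$ together with the common ``fresh'' $\sigma$-gap law beyond the next renewal), $\pi$ is stationary for $(X_k)$ as well.

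Finally, the recurrence of $\rho$ gives $\bP(K < \infty) = 1$, so state $0$ is recurrent for $(X_k)$; Kac's formula applied to the stationary probability $\pi$ then yields $\bE[K] = 1/\pi(0) = \bE[\sigma_1]$. I expect the main obstacle to be rigorously justifying the stationarity of $\pi$: the delayed-process argument above is conceptually the cleanest, but formalizing the equality of the transition kernels of $(X_k)$ and $(\widetilde X_k)$ (which hinges on the renewal property at the $\sigma$-point sitting at $\tau_k + X_k$, which is a stopping time in the joint filtration) demands some care. A fallback would be to verify $\sum_a \pi(a) P(a, b) = \pi(b)$ directly via the renewal equation for $\sigma$, which is routine but bookkeeping-heavy.
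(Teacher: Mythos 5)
Your approach is correct but genuinely different from the paper's, which avoids Markov chain machinery entirely. The paper observes that $\bP(n\in\rho)=\bP(n\in\tau)\bP(n\in\sigma)$ together with the renewal theorem gives $\bE[\rho_1]=\bE[\tau_1]\bE[\sigma_1]$; then, writing $S_m=K_1+\cdots+K_m$ for i.i.d.\ copies of $K$, one has $\tau_{S_m}\stackrel{d}{=}\rho_m$, and the strong law applied twice to $\tau_{S_m}/m=(\tau_{S_m}/S_m)(S_m/m)$ yields $\bE[\rho_1]=\bE[\tau_1]\bE[K]$, whence $\bE[K]=\bE[\sigma_1]$. That argument is shorter and needs no discussion of stationarity or irreducibility, but it does tacitly require $\bE[\tau_1]<\infty$ (to cancel $\bE[\tau_1]$), which holds in all of the paper's applications since the lemma is only invoked for truncated processes $\bar\tau,\hat\tau$. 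Your Kac's-formula route, by contrast, does not need $\bE[\tau_1]<\infty$ at all, so in that sense it is more general. You correctly flag the stationarity of $\pi$ as the main technical point; one further point you should address explicitly is that Kac's formula in the form $\bE_0[K]=1/\pi(0)$ requires $\pi$ to be concentrated on the communicating class of $0$ (equivalently, irreducibility of $(X_k)$ on $\mathrm{supp}(\pi)$). This is \emph{not} automatic from the hypotheses as literally stated in the lemma — e.g.\ if $\tau=\sigma=3\bbZ_{\geq 0}$ then $\rho$ is recurrent, $\bE[\sigma_1]=3<\infty$, but $\bE[K]=1$, and indeed both your argument and the paper's silently fail there (the chain $X_k\equiv 0$ is not irreducible on $\mathrm{supp}(\pi)=\{0,1,2\}$, and $\bE[\rho_1]\neq\bE[\tau_1]\bE[\sigma_1]$). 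The fix in both cases is the aperiodicity built into assumption \eqref{def:alphas} (which forces $\bP(\tau_1=n)\bP(\sigma_1=n)>0$ for large $n$ and hence irreducibility on $\{j:\bP(\sigma_1>j)>0\}$), but since the lemma is stated without this assumption it is worth saying so.
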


\begin{proof}
Since $\bP(n\in\rho)=\bP(n\in\tau)\bP(n\in\sigma)$, the renewal theorem gives 
\begin{equation}\label{Erho1}
  \bE(\rho_1)=\bE(\sigma_1)\bE(\tau_1).
  \end{equation}
Let $K_1,K_2,\dots$ be i.i.d.~copies of $K$ and let $S_m:=K_1+\dots+K_m$.  Then $\tau_{S_m}$ has the distribution of $\rho_m$, so using \eqref{Erho1},
\[
  \frac{\tau_{S_m}}{m}\to \bE[ \rho_1] = \bE[ \tau_1]\bE[\sigma_1] \ \ \text{ a.s.}, \quad \text{and}\quad
  \frac{\tau_{S_m}}{m}= \frac{\tau_{S_m}}{S_m} \frac{S_m}{m} \to \bE[ \tau_1] \bE[ K] \ \  \text{ a.s.},
  \]
and the lemma follows.
\end{proof}

Write $\bP_{x,y}(\cdot)$ for $\bP(\cdot\mid \tau_0=x,\sigma_0=y)$, and write $\bE_{x,y}$ the corresponding expectation.

\begin{lemma}\label{lem:N=0}
Assume \eqref{def:alphas}, and suppose $\rho$ is recurrent and $\ga^*>0$ (equivalently, $\ga+\tga>1$.)
Given $\eta>0$, provided $\gd$ is sufficiently small we have for large $n$ and all $0\leq x \leq \gd n$:
\begin{equation}\label{N=0}
  \bP_{-x,0}(\rho\cap [0,n]=\emptyset) < \eta \, .
\end{equation}
If also $\ga\geq 1$, then the same is true with $\gd>0$ arbitrary.  The analogous results with $\tau,\sigma$ interchanged hold as well.
\end{lemma}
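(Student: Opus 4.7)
My strategy is to apply the second-moment (Paley--Zygmund) method to $Z := |\rho \cap [0,n]|$, combined with a block iteration to boost a positive-constant lower bound on $\bP(Z \geq 1)$ to $1 - \eta$.

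First, since $\bP_{-x,0}(k \in \rho) = u^\tau_{k+x}\, u^\sigma_k$ (with $u^\tau, u^\sigma$ the renewal mass functions of $\tau,\sigma$), the regular variation of $u^\tau$ and $u^\sigma$ (Table~\ref{table:Doney}) together with $u^\tau_{k+x}/u^\tau_k \to 1$ uniformly for $k \in [\gd n, n]$ and $x \leq \gd n$ yield
\[
  \bE_{-x,0}[Z] \geq (1 - \gd^{1-\theta^*} - o(1))\, U^*_n.
\]
Second, by the strong Markov property at the first $\rho$-arrival, $\bP_{-x,0}(j, k \in \rho) = \bP_{-x,0}(j \in \rho)\, \bP_{0,0}(k-j \in \rho)$ for $j \leq k$, which gives
\[
  \bE_{-x,0}[Z^2] = 2\sum_j \bP_{-x,0}(j \in \rho)\, U^*_{n-j} - \bE_{-x,0}[Z] \leq 2 U^*_n\, \bE_{-x,0}[Z].
\]
Paley--Zygmund then yields $\bP_{-x,0}(Z \geq 1) \geq \bE_{-x,0}[Z]^2/\bE_{-x,0}[Z^2] \geq c(\gd) > 0$ uniformly in $x \leq \gd n$ and $n$ large.

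When $\ga \geq 1$ (so $\tga \geq \ga \geq 1$ and $\theta^* = 0$), a direct asymptotic evaluation of $\sum_j \bP_{-x,0}(j \in \rho)\, U^*_{n-j}$ (corresponding to the Beta identity $B(1,2) = 1/2$) shows $\bE_{-x,0}[Z^2]/\bE_{-x,0}[Z]^2 \to 1$, so Paley--Zygmund directly gives $\bP_{-x,0}(Z \geq 1) \to 1$ as $n \to \infty$, uniformly for any $\gd > 0$. This settles the ``arbitrary $\gd$'' case. When $\ga \in (0,1)$ with $\ga + \tga > 1$, the constant $c(\gd)$ is strictly below $1$; to reach $1 - \eta$, I would partition $[0,n]$ into $M$ disjoint blocks of length $n/M$ and apply the strong Markov property at each block boundary to the backward recurrence times $(D_\tau, D_\sigma)$ of $\tau$ and $\sigma$. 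Conditional on $(D_\tau, D_\sigma)$, the failure probability on the next block is again bounded by $1 - c(\gd)$ whenever $|D_\tau - D_\sigma| \leq \gd \cdot n/M$, and iterating over all $M$ blocks yields $\bP_{-x,0}(\rho \cap [0,n] = \emptyset) \leq (1 - c(\gd))^M < \eta$ for $M$ large, provided $\gd$ is chosen small enough that the initial offset $x \leq \gd n$ is compatible with the block scale.

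The principal obstacle is the iteration in the case $\ga \in (0,1)$: it requires uniform control on the conditional distribution of $|D_\tau - D_\sigma|$ at the block boundaries, given failure in all prior blocks. Under Dynkin--Lamperti scaling, these backward recurrence times are of the same order as the block length, and conditioning on the rare failure event may bias them toward large values; controlling this bias precisely is the main technical difficulty, and is the reason $\gd$ must be chosen sufficiently small.
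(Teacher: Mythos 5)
Your plan rests on the Paley--Zygmund inequality, which for this problem is structurally weaker than what the lemma requires. Your second-moment computation correctly gives $\bE_{-x,0}[Z^2] = 2\sum_j \bP_{-x,0}(j\in\rho)\,U^*_{n-j} - \bE_{-x,0}[Z] \leq 2U_n^*\,\bE_{-x,0}[Z]$, so Paley--Zygmund yields at best $\bP_{-x,0}(Z\geq 1) \geq \bE_{-x,0}[Z]/(2U_n^*)$, which is capped near $1/2$ no matter how small $\gd$ is. Your own Beta-function evaluation confirms that for $\ga^*\in(0,1)$ the ratio $\bE[Z^2]/\bE[Z]^2$ tends to a constant $2(1-\theta^*)B(1-\theta^*,2-\theta^*) > 1$, so the Paley--Zygmund route is intrinsically stuck below $1$ in the main case. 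Everything then hinges on the block-iteration step, and that step is not just ``technically difficult'' but missing: conditioning on $\rho\cap(\text{prior blocks})=\emptyset$ does not produce backward recurrence times $(D_\tau,D_\sigma)$ with $|D_\tau - D_\sigma|\leq\gd\cdot n/M$. Under Dynkin--Lamperti scaling $D_\tau,D_\sigma$ are of the same order as the block length, and the failure conditioning plausibly pushes the offset to exactly this order, so the hypothesis you want to iterate is typically false, not merely hard to verify. In addition, a smaller point: the claim that $u^\tau_{k+x}/u^\tau_k \to 1$ uniformly for $k\in[\gd n, n]$ and $x\leq\gd n$ is not correct, since $x/k$ can reach $1$ and the ratio then tends to $2^{-\theta_\tau}$; this does not sink the Paley--Zygmund constant but it does mean your stated prefactor $(1-\gd^{1-\theta^*}-o(1))$ is off.

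The paper's proof sidesteps all of this with a first-moment-only identity that is strictly stronger than Paley--Zygmund here. Writing $N = |\rho\cap[0,n]|$, one has $\bP_{-x,0}(N=0) = 1 - \bE_{-x,0}(N)/\bE_{-x,0}(N\mid N\geq 1)$, and the renewal Markov property (restart at the first $\rho$-point) gives the clean upper bound $\bE_{-x,0}(N\mid N\geq 1)\leq U_n^*$; hence $\bP_{-x,0}(N=0)\leq (U_n^* - \bE_{-x,0}(N))/U_n^*$. The remaining work is to show $\bE_{-x,0}(N)\geq(1-\eta)U_n^*$, which follows by splitting the sum $\sum_j \bP(j\in\sigma)\bigl[\bP(j\in\tau)-\bP(j+x\in\tau)\bigr]$ at $j=A\gd n$ and using regular variation of $\bP(j\in\tau)$ together with $U^*_{A\gd n}\leq\tfrac{\eta}{2}U^*_n$ (the latter using $\gd$ small when $\ga^*>0$, and $A$ small when $\ga\geq 1$ so that $\gd$ may be arbitrary). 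The observation $\bE(N\mid N\geq 1)\leq U_n^*$ is the key trick you are missing: it converts the problem into a pure first-moment comparison and avoids the factor-of-$2$ loss and the iteration entirely. As written, your $\ga\geq 1$ Paley--Zygmund argument is plausible but not established (the Karamata evaluation with the offset $x\leq\gd n$ and $\gd$ arbitrary needs care near $j$ small), and the $\ga^*\in(0,1)$ case has a genuine gap.
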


\begin{proof}
Fix $x\leq \gd  n$ and let $N := |\rho\cap [0,n]|$. Then $\bP_{-x,0}(\rho\cap [0,n]=\emptyset) = \bP_{-x,0}(N=0)$
and $\bE_{-x,0}(N\mid N\geq 1) \leq U_n^*$, so
\begin{equation}\label{N0}
  \bP_{-x,0}(N=0) = \frac{\bE_{-x,0}(N\mid N\geq 1) - \bE_{-x,0}(N)}{\bE_{-x,0}(N\mid N\geq 1)} \leq \frac{U_n^* - \bE_{-x,0}(N)}{U_n^*}
\end{equation}
while
\begin{align}\label{differ}
  U_n^* - \bE_{-x,0}(N) &= \sum_{j=0}^n \bP(j\in\sigma) \big[ \bP(j\in\tau) - \bP(j+x\in\tau) \big]\, .
\end{align}
Since $\bP(j\in\tau)$ is regularly varying, given $\eta>0$, there exists $A$ (large) such that for $\gd>0$, for $n$ large we have for all $x\leq \gd n$ and $A\gd n\leq j\leq n$ that 
\begin{equation}\label{shift}
  \bP(j\in\tau) - \bP(j+x\in\tau) \leq \frac{\eta}{2}\bP(j\in\tau)\, .
  \end{equation}
Since $U_k^*$ is regularly varying, with positive index since $\ga^*>0$, if $\gd$, and therefore $A\gd$, is sufficiently small then for large $n$ we have $U_{A\gd n}^* \leq \frac{\eta}{2} U_n^*$.
With \eqref{differ} this gives that for large $n$,
\begin{equation}\label{twobounds}
  U_n^* - \bE_{-x,0}(N) \leq  U_{A\gd n}^* + \frac{\eta}{2} U_n^* \leq \eta U_n^*.
\end{equation}
With \eqref{N0}, this proves \eqref{N=0} for large $n$. 

\smallskip
Now consider $\ga\geq 1$, meaning $\bP(k\in\tau)$ is slowly varying.  Given $\eta>0$, for any $\gd>0$ we can choose $A$ (small this time) so that $U_{A\gd n}^* \leq \frac{\eta}{2} U_n^*$ for large $n$.  Inequality \eqref{shift} holds for all $j\geq A\gd n$ and $x\leq\gd n$, for $n$ large, so \eqref{twobounds} is valid and \eqref{N=0} follows.
\end{proof}

\subsection{Upper bound for $\bP(\rho_1>n)$}
\label{sec:upperbound}

Let us fix $\gep>0$.
Let us call a gap $\tau_k-\tau_{k-1}$ or $\sigma_k-\sigma_{k-1}$ \emph{long} if it exceeds $(1-2\gep)n$; the \emph{starting} and \emph{ending points} of such a gap are $\tau_{k-1},\tau_k$ or $\sigma_{k-1},\sigma_k$. Let $S$ be the first starting point of a long gap in $\tau$ or $\sigma$, and let $T$ be the ending point of the gap that starts at $S$. (To make things well-defined, if both $\tau$ and $\sigma$ have long gaps starting at $S$, then we take $T$ to be the first endpoint among these two gaps.)
Then
\begin{align}\label{2cases}
  \bP(\rho_1>n) &\leq \bP(\rho_1>n, \sigma \cap [\gep n, (1-\gep)n] \neq \emptyset, \tau \cap [\gep n, (1-\gep)n] \neq \emptyset)\notag\\
    &\hspace{3.5in} + \bP(\rho_1\geq T) .
\end{align}

\smallskip
For fixed $n$, we let $\bar\tau_1$ have the distribution of $\tau_1$ given $\tau_1 \leq (1-2\gep)n$, and similarly for $\bar\sigma_1$.  Let $\bar\tau$ and $\bar\sigma$ be renewal processes with gaps distributed as $\bar\tau_1$ and $\bar\sigma_1$, respectively, and let $K:=\min\{k\geq 1 : \tau_k \in\sigma\}$ and $\bar{K} := \min\{k\geq 1:\bar\tau_k \in \bar\sigma\}$. Then, we have 
\begin{align}\label{nolong}
  \bP&(\rho_1\geq T,S\in\tau) \notag\\
  &= \sum_{k\geq 0}
    \bP\bigg(K>k, \tau_i-\tau_{i-1} \leq (1-2\gep)n \text{ for all } i\leq k, \tau_{k+1}-\tau_k>(1-2\gep)n,\notag\\
  &\hskip 4cm \sigma_i-\sigma_{i-1} \leq (1-2\gep)n \text{ for all } i\text{ with } \sigma_{i-1}\leq \tau_k \bigg) \notag\\
  &\leq \sum_{k\geq 0} \bP(\bar{K}>k) \bP(\tau_1>(1-2\gep)n) \notag\\
  &= \bE[\bar{K}]\bP(\tau_1>(1-2\gep)n).
\end{align}
From Lemma \ref{lem:EK} we have $\bE[\bar{K}] = \bE(\sigma_1 \mid \sigma_1\leq (1-2\gep)n) \leq \tilde\mu_n$.  Thus for large $n$ we have 
\[
  \bP(\rho_1\geq T,S\in\tau) \leq (1-3\gep)^{-\ga}\tilde\mu_n \bP(\tau_1>n).
\]
A similar computation holds for $\bP(\rho_1\geq T,S\in\sigma)$ so we have for large $n$:
\begin{equation}\label{nolong2}
  \bP(\rho_1\geq T) \leq (1-3\gep)^{-\tga}\left\{ \tilde\mu_n \bP(\tau_1>n) + \mu_n\bP(\sigma_1>n) \right\}.
\end{equation}  

\smallskip
We now need a much smaller bound for the first term on the right side of \eqref{2cases}.  Define
$U := \min \tau\cap (\gep n,\infty)$ and $V := \min \sigma\cap (\gep n,\infty).$
Then
\begin{align}\label{uvsum}
  \bP\big(\rho_1&>n, \sigma \cap (\gep n, (1-\gep)n) \neq \emptyset, \tau \cap (\gep n, (1-\gep)n) \neq \emptyset,U<V \big) \notag\\
  &\leq \sum_{u<v,u,v\in(\gep n, (1-\gep)n)} \bP(\rho_1>\gep n,U=u,V=v) \bP_{u-v,0}(\rho_1>\gep n).
\end{align}

We may now apply Lemma \ref{lem:N=0} for the last probability. Fix $\eta>0$. Then, since $\tga\geq \ga\geq 1$ for $n$ large enough,
\begin{align}\label{unifclaim}
  \bP_{-x,0}(\rho_1>\gep n) < \eta \quad\text{for all } 0\leq x\leq n.
\end{align}
Therefore, summing  over $u,v$, the right side of \eqref{uvsum} is bounded by $\eta \bP(\rho_1>\gep n, U<V)$, and a similar bound holds when $U>V$. Hence, combining this with with \eqref{2cases} and \eqref{nolong2}, we get that
\begin{equation}
\bP(\rho_1>n) \leq (1-3\gep)^{-\tga} \left\{ \tilde\mu_n \bP(\tau_1>n) + \mu_n\bP(\sigma_1>n) \right\} + \eta \bP(\rho_1> \gep n)\, .
\end{equation}
Now we may use \eqref{gepeffect} to control the last term: we finally get that, provided $\eta$ is small enough, for large $n$,
\begin{equation}\label{finalupper}
  \bP(\rho_1>n) \leq (1+4\tga\gep) \left\{ \tilde\mu_n \bP(\tau_1>n) + \mu_n\bP(\sigma_1>n) \right\}\, .
\end{equation}

\subsection{Lower bound for $\bP(\rho_1>n)$}
\label{sec:lowerbound}
We use a modification of our earlier truncation. Fix $n$ and, analogously to $\bar\tau, \bar\sigma$, let $\hat\tau$ and $\hat\sigma$ be renewal processes with gaps $\hat\tau_i-\hat\tau_{i-1}=(\tau_i-\tau_{i-1})\wedge (n+1)$ and $\hat\sigma_i-\hat\sigma_{i-1}=(\sigma_i-\sigma_{i-1})\wedge (n+1)$, respectively, and let $\hat\rho=\hat\tau \cap \hat\sigma$ and $\hat K := \min\{k\geq 1:\hat\tau_k \in \hat\sigma\}$. We call a gap in $\hat\tau$ or $\hat\sigma$ \emph{large} if its length is $n+1$.  Let $[S_{\hat\tau},T_{\hat\tau}]$ and $[S_{\hat\sigma},T_{\hat\sigma}]$ be the first large gaps in $\hat\tau$ and $\hat\sigma$ respectively, and let $J_{\hat\tau}$ and $J_{\hat\sigma}$ be the number of large gaps in $\hat\tau$ and $\hat\sigma$ respectively before time $\hat\rho_1^{(n)}$.

Observe that
\begin{equation}\label{lower1}
  \bP(\rho_1> n) = \bP(\hat\rho_1 > n) \geq \bP(J_{\hat\tau}\geq 1) + \bP(J_{\hat\sigma}\geq 1) - \bP(J_{\hat\tau}\geq 1,J_{\hat\sigma}\geq 1).
\end{equation}
We claim that 
\begin{equation}\label{claimLtau}
  \bP(J_{\hat\tau}\geq 1) \geq (1-o(1))\bE[J_{\hat\tau}] \quad \text{as } n\to \infty
\end{equation}
and
\begin{equation}\label{claim2L}
  \bP(J_{ \hat\tau } \geq 1,J_{ \hat\sigma } \geq 1) = o\left( \bP(J_{\hat\tau}\geq 1)  +\bP(J_{\hat\sigma}\geq 1)  \right) \quad \text{as } n\to\infty.
\end{equation}

\medskip
Assuming \eqref{claimLtau} and \eqref{claim2L}, we have 
\begin{equation}\label{lower1a}
  \bP(\rho_1> n) \geq (1-o(1))\bigg( \bE[J_{\hat\tau}] + \bE[J_{\hat\sigma}] \bigg).
\end{equation}
Then using Lemma \ref{lem:EK} to get $\bE[\hat K]=\bE[\hat\sigma_1]=\tilde\mu_{n+1}$ we obtain
\begin{align}\label{ELlower}
  \bE[J_{\hat\tau}] & \sum_{k\geq 0} \bP\left(\tau_{k+1} - \tau_k > n , \hat K>k \right)\notag\\
  &= \bE[\hat K]\bP(\tau_1>n)= \tilde\mu_{n+1}\bP(\tau_1>n),
\end{align}
and similarly for $\bE[J_{\hat\sigma}]$.  With \eqref{lower1a} this shows that
\begin{equation}
  \bP(\rho_1>n) \geq (1-o(1)) \left\{ \tilde\mu_n \bP(\tau_1>n) + \mu_n\bP(\sigma_1>n) \right\}\, .
\end{equation}
This and \eqref{finalupper} prove Theorem \ref{thm:Prhon2}(iii).

\medskip
It remains to prove \eqref{claimLtau} and \eqref{claim2L}.  We begin with \eqref{claim2L}. 
We write
\begin{align}
\bP(J_{ \hat\tau } \geq 1,J_{ \hat\sigma } \geq 1) = \bP(J_{ \hat\tau } \geq 1&,J_{ \hat\sigma } \geq 1,S_{\hat\tau} <S_{\hat\sigma} )  \notag  \\
&  +\bP(J_{ \hat\tau } \geq 1,J_{ \hat\sigma } \geq 1,S_{\hat\tau} >S_{\hat\sigma} ) , \label{claim2L1}
\end{align}
and we control both terms separately. 
On the event $\{S_{\hat\tau} <S_{\hat\sigma} \}$, we decompose over the first $\hat\sigma$ renewal in the interval $(S_{\hat\tau}, T_{\hat\tau})$, to obtain that 
\begin{equation}
\label{claim2L2}
\bP(J_{ \hat\tau } \geq 1,J_{ \hat\sigma } \geq 1,S_{\hat\tau} <S_{\hat\sigma} )  \leq \bP(J_{ \hat\tau } \geq 1) \times \sup_{x\in(0,n]} \bP_{x,0} \left ( J_{ \hat\sigma } \geq 1 \right ) .
\end{equation}
From Lemma \ref{lem:N=0} we have that for any $\eta>0$, for $n$ large enough, for all $1\leq x\leq  n/2$,
\begin{equation}
\label{claim2L3}
\bP_{x,0} \left ( J_{ \hat\sigma } \geq 1 \right ) \leq \bP_{x,0} \left ( \rho_1 \geq n/2 \right ) \leq \eta \, .
\end{equation}
If $x \in (n/2,n]$, then we decompose over the first $\sigma$ renewal in the interval $[x/2,x)$ if it exists, to get
\begin{equation}
\bP_{x,0} \left ( J_{ \hat\sigma } \geq 1 \right ) \leq \bP( \sigma\cap[x/2,x) =\emptyset ) + \sup_{y\in [1,x/2] }\bP_{y,0} \left ( J_{ \hat\sigma } \geq 1 \right ).
\end{equation}
The last sup in bounded as in \eqref{claim2L3}. For the first probability on the right, using the renewal theorem when $\ga>1$ and \cite{cf:Eric} when $\ga=1$, we get that there is a constant $c_{5}$ such that
\[\bP( \sigma\cap[x/2,x) =\emptyset) \leq \sum_{k=1}^{x/2} \bP(k\in\sigma) \bP(\sigma_1 >x/2) \leq c_{5} \frac{x}{\mu_x} \gp(x) x^{-\ga} \to 0\ \text{as } x\to\infty.\]
The convergence to $0$ is straightforward when $\ga>1$, and uses that $\gp(x) /\mu_x \to 0$ as $x\to\infty$ when $\ga=1$ (see for example Theorem 1 in \cite[Ch. VIII, Sec. 9]{cf:Feller2}).
It follows that the sup in \eqref{claim2L2} approaches 0 as $n\to\infty$. The second probability on the right side of 
\eqref{claim2L1} is handled similarly, and this proves \eqref{claim2L}.

\medskip
We now turn to \eqref{claimLtau}. We  show that for any $\eta>0$, we can take $n$ large enough so that for any $j\geq 1$, 
\begin{equation}\label{claimLtau1}
\bP(J_{\hat\tau} \geq j +1) \leq \eta \bP\left(  J_{\hat\tau} \geq j  \right).
\end{equation}
This easily gives that $\bE \left[ J_{\hat\tau} \right] = \sum_{j\geq 1} \bP(J_{\hat\tau} \geq j) \leq \frac{1}{1-\eta} \bP(J_{\hat\tau} \geq 1 )$, which is \eqref{claimLtau}. To prove \eqref{claimLtau1}, we denote $T^{(j)}_{\hat\tau}$ the endpoint of the $j{\rm th}$ large gap in $\hat\tau$. Then, decomposing over the first $\hat \sigma$ renewal in the interval $[T^{(j)}_{\hat\tau}-n, T^{(j)}_{\hat\tau})$, we get, similarly to \eqref{claim2L2}
\begin{align*}
\bP(J_{\hat\tau} \geq j +1) & \leq \bP\left(  J_{\hat\tau} \geq j  \right) \times \sup_{x\in(0,n]} \bP_{0,-x} \left(  J_{\hat\tau} \geq 1 \right) \\
& \leq  \bP\left(  J_{\hat\tau} \geq j  \right) \sup_{x\in(0,n]} \bP_{0,-x} \left( \rho_1\geq n+1 \right) \leq \eta \,  \bP\left(  J_{\hat\tau} \geq j  \right) ,
\end{align*}
where the last inequality is valid provided that $n$ is large enough, thanks to Lemma~\ref{lem:N=0}.  This completes the proof of \eqref{claimLtau}, and thus also of Theorem \ref{Prhon2}(iii).

\section{Proof of Lemma \ref{lem:regvarying}: Stretching of gaps}
\label{sec:regvary}

By assumption $\rho$ is recurrent, and we need to show that when $n$ is large $\bP(\rho_1=n) \approx \bP(\rho_1 = n+k)$ for all $k\in(0,\gep n)$, with $\gep\ll 1$.
The idea is to take the set of trajectories of $\tau$ and $\sigma$ such that $\rho_1=n$, and to \emph{stretch} them slightly so that $\rho_1=n+k$, see Figure~\ref{fig:stretch}.
In Section \ref{sec:largegap}, we prove that for some $\gd>0$, conditioned on $\rho_1=n$, the largest gap of $\tau$ and $\sigma$ in $[0,n]$ is larger than $\gd n$ with high probability; see Lemma \ref{lem:longjump}. Assume that it is a $\tau$-gap, and that it has length $m$.
Then, in Section \ref{sec:stretching}, we show that for $\gep\ll\delta$ we can stretch this $\tau$-gap by $k \leq\gep n \ll m$, and stretch $\sigma$ inside this $\tau$-gap by the same $k$, without altering the probability significantly.

\begin{figure}[htbp]
\begin{center}
\setlength{\unitlength}{2mm}
\begin{picture}(58,14)(0,0)
\put(0,0) {\includegraphics[width=120mm]{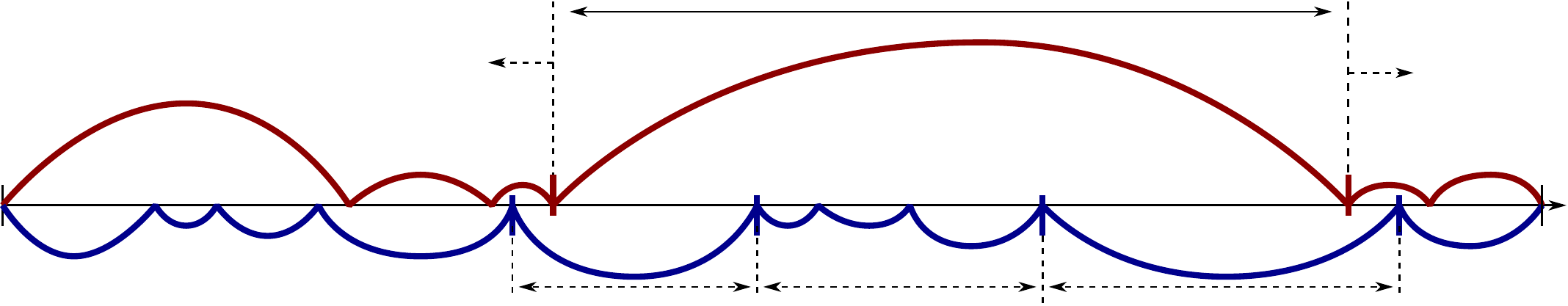}}
\put(5,0) {$\sigma$}
\put(5,8) {$\tau$}
\put(-0.5,5) {$0$}
\put(58,5.5) {$\rho_1=n$}
\put(30,12) {$\tau_i -\tau_{i-1}=m\geq \gd n$}
\put(18.4,1) {$p$}
\put(23,-1) { \small $t_1$}
\put(34,-1) {\small $t_2$}
\put(46.4,-1) {\small $t_3$}
\put(21.5,2.2) {$j$}
\end{picture}
\end{center}
\caption{
How to ``stretch'' trajectories, to go from $\rho_1=n$ to $\rho_1=n+k$ :  we identify the largest gap in $\tau$ (which is larger than $\gd n$ with great probability, see Lemma \ref{lem:longjump}) and we stretch it by $k$, while at the same time stretching one of the three associated $\sigma$-intervals (the largest of $t_1,t_2,t_3$). See the proof of Lemma \ref{lem:stretch} for more detailed explanations.}
\label{fig:stretch}
\end{figure}

\subsection{Probability of having a large gap}
\label{sec:largegap}

Denote by $\mA_{\gd}$ the event that there is a gap (either in $\sigma$ or $\tau$) longer than $\gd n$:
\begin{equation}
\label{def:longjump}
\mA_\gd:= \big\{ \exists \, i:\,  \tau_i-\tau_{i-1} > \gd n  \, , \, \tau_i\leq n \, \text{ or } \, \sigma_{i}- \sigma_{i-1} > \gd n\, ,\,  \sigma_i \leq n  \big\} \, .
\end{equation}
We will show that $\mA_{\gd}^c$ contributes only a small part of $\{\rho_1=n\}$. Recall that 
\[v_n = \bP(\rho_1 > n)^2 \bP(n\in\rho)\, . \]

\begin{lemma}
\label{lem:longjump}
Assume \eqref{def:alphas}. There exist $c_{6}>0$ and $\gd_0$ such that if $\gd\in(0,\gd_0)$, then for $n$ sufficiently large, 
\[ \bP\Big(\rho_1=n\, ;\,  \mA_{\gd}^c \Big) \leq e^{-c_{6}/\gd} v_n \, .\]
\end{lemma}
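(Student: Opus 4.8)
\textbf{Proof strategy for Lemma \ref{lem:longjump}.}
The plan is to bound $\bP(\rho_1 = n\,;\,\mA_\gd^c)$ by controlling, on the event $\mA_\gd^c$, the number of renewals of $\tau$ (resp.\ $\sigma$) in $[0,n]$, and then using a union bound over the possible configurations. On $\mA_\gd^c$ every gap of $\tau$ and $\sigma$ inside $[0,n]$ has length $\leq \gd n$, so $\tau$ has at least $1/\gd$ renewals in $[0,n]$, and likewise for $\sigma$. The key point is that having many renewals with all gaps bounded by $\gd n$ is costly: each such gap carries a factor $\bP(\tau_1 \leq \gd n)$, but more usefully, we will keep track of the probabilistic price of forcing $\rho_1 = n$ exactly, i.e.\ that $n$ is a common renewal point but no earlier point is. First I would write, by the renewal structure,
\[
  \bP(\rho_1 = n\,;\,\mA_\gd^c) \leq \sumtwo{0=j_0<j_1<\dots<j_r=n}{0=i_0<i_1<\dots<i_s=n}
    \prod_{a=1}^r \bP(\tau_1 = j_a - j_{a-1})\,\ind_{\{j_a-j_{a-1}\leq \gd n\}}
    \prod_{b=1}^s \bP(\sigma_1 = i_a - i_{a-1})\,\ind_{\{\cdot\leq \gd n\}}\,\ind_{\{\{i_b\}\cap\{j_a\}\subseteq\{0,n\}\}},
\]
where the last indicator enforces that the two partitions of $[0,n]$ share no interior point (so that $\rho_1 = n$ and not less). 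The constraints $r,s\geq 1/\gd$ are automatic.

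The heart of the argument is to convert the ``no common interior point'' constraint into an exponential saving. Here I would decouple: sum freely over the $\sigma$-configuration to produce $\bP(n\in\sigma)$, but pay attention that the $\tau$-points must be \emph{avoided}. Concretely, fixing the $\tau$-partition $0=j_0<\dots<j_r=n$ (all gaps $\leq\gd n$), the inner sum over $\sigma$-configurations avoiding $\{j_1,\dots,j_{r-1}\}$ and with all gaps $\leq \gd n$ is at most $\bP(n\in\sigma)$ times a factor reflecting the avoidance. Since there are $r-1\geq 1/\gd - 1$ forbidden points roughly equally spaced (gaps $\leq \gd n$ means points are $\gep$-dense at scale $\gd n$), and each $\sigma$-gap has length $\leq \gd n$, the $\sigma$-walk is forced to ``dodge'' order $1/\gd$ points, each dodge costing a constant factor bounded away from $1$ (this uses recurrence of $\rho$ and the local limit behavior of $\bP(k\in\sigma)$ from Table \ref{table:Doney}, giving $\bP(k\in\sigma)$ comparable across a window). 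This yields a factor $e^{-c/\gd}$. Summing the remaining free $\tau$-configuration gives $\bP(n\in\tau)$. Combining, $\bP(\rho_1=n\,;\,\mA_\gd^c) \leq e^{-c/\gd}\,\bP(n\in\tau)\bP(n\in\sigma) = e^{-c/\gd}\,\bP(n\in\rho)$. Finally, since $\bP(\rho_1>n)^2 \leq 1$... no: we need $v_n = \bP(\rho_1>n)^2\bP(n\in\rho)$ on the right, which is \emph{smaller}, so I must instead exploit that the avoidance cost should really be measured relative to $\bP(\rho_1 > n)$; more precisely, I would organize the union bound so that each of the order $1/\gd$ blocks between consecutive ``checkpoints'' contributes a factor of the form $\bP(\rho_1 > c\gd n)/(\text{something})$ and use regular variation of $\bP(\rho_1>n)$ (Theorem \ref{thm:Prhon2}) to recombine $\bP(\rho_1 > c\gd n)^{1/\gd}$-type products back into $\bP(\rho_1>n)^2$ with an extra $e^{-c'/\gd}$.

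\textbf{Main obstacle.} The delicate point is getting the \emph{right power} of $\bP(\rho_1>n)$, namely squared, rather than just a bare $\bP(n\in\rho)$ or a single power. The square must come from the fact that $\rho$ has no renewal in $(0,n)$ \emph{and} from the two-sided nature of the avoidance (the configuration must ``start fresh'' at $0$ and ``arrive fresh'' at $n$), reminiscent of the $\bP(\tau_1>n)^2$ in the denominator of \eqref{alpha0}. I expect to handle this by splitting $[0,n]$ at its midpoint (or at the location of a would-be largest gap), bounding the contribution of each half by something like $\bP(\rho_1 > n/2)$ up to the $e^{-c/\gd}$ avoidance factor accumulated from the $\asymp 1/\gd$ forced small gaps, and then invoking regular variation to write $\bP(\rho_1>n/2)^2 \asymp \bP(\rho_1>n)^2 = v_n/\bP(n\in\rho)$, absorbing the mismatch into the constant $c_6$. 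Checking that the avoidance cost per block is genuinely bounded away from $1$ — uniformly in $n$ and in the block location — is the technical crux and will rely on the sharp local asymptotics in Table \ref{table:Doney} together with recurrence ($\theta^*<1$).
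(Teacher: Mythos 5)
Your strategy correctly identifies the two main ingredients that must be manufactured — the squared tail factor $\bP(\rho_1>n)^2$ and the exponential $e^{-c/\gd}$ from the forced small gaps — but the proposal does not actually supply the mechanism for either, and two of the claimed reductions would not work as stated.

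\textbf{Missing decoupling tool.} The whole difficulty is that the event $\{\rho_1=n\}$ entangles the beginning of $[0,n]$, the middle, and the end. Your plan to "organize the union bound so that each block contributes a factor $\bP(\rho_1>c\gd n)$" and then "recombine $\bP(\rho_1>c\gd n)^{1/\gd}$ into $\bP(\rho_1>n)^2$" is not available: blocks near the middle of $[0,n]$ contribute a constant avoidance cost $\eta<1$ (independent of $n$), not a tail probability $\bP(\rho_1>c\gd n)$ that scales with $n$, and no amount of regular-variation bookkeeping turns a product of $\asymp 1/\gd$ factors of $\bP(\rho_1>c\gd n)$ into $\bP(\rho_1>n)^2$ — the exponents simply do not match. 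What the paper does instead is condition on $\{n\in\rho\}$ and peel off the two \emph{end thirds} of $[0,n]$ using an explicit conditional-independence lemma (Lemma~\ref{lem:appendix}: conditioning on a distant renewal only changes the probability of a local event by a bounded constant). This is how the two factors of $\bP(\rho_1>n)$ are extracted, one from each end. Your midpoint-splitting idea points in the same direction, but without a tool like Lemma~\ref{lem:appendix} there is no way to actually separate the two halves under the conditioning $\{n\in\rho\}$ — the two halves are correlated through the requirement that $n$ is a renewal of both $\tau$ and $\sigma$.

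\textbf{The per-block cost.} You assert that each of the $\asymp 1/\gd$ "dodges" costs a constant factor bounded away from $1$, "using recurrence of $\rho$ and the local limit behavior from Table~\ref{table:Doney}." This is correct in spirit when $\ga\geq 1$, where the paper invokes Lemma~\ref{lem:N=0} (recurrent $\rho$ makes it unlikely to avoid a long interval). But it is the \emph{wrong mechanism} when $\ga<1$: in that regime large $\tau$-gaps are common, and the small cost per block comes not from the avoidance constraint but from the requirement that $\tau$ itself have no gap $>\gd n$ across the block — i.e.\ the paper bounds $\bP_{0,0}(\mD_{\gd\tau}^{(1)})\leq\eta$ directly via a truncated-renewal computation. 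Your sketch never addresses this dichotomy, and as written the argument for $\ga<1$ would have no teeth (the $\sigma$-walk dodging a few isolated $\tau$-points is cheap; the expensive event is the absence of any large gap).

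So the proposal captures the shape of the estimate but leaves a genuine gap at both technical cruxes: no decoupling mechanism to produce $\bP(\rho_1>n)^2$, and an incomplete/misattributed per-block cost estimate. These are precisely the two places where the paper's proof does real work (Lemma~\ref{lem:appendix} on one hand, Lemma~\ref{lem:N=0} plus the truncated-renewal computation \eqref{jsum} on the other).
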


\begin{proof}
On the event $\{\rho_1=n\}\cap\mA_{\gd}^c$, all $\tau$ and $\sigma$ gaps are smaller than $\gd n$, and therefore all blocks of length at least $\gd n$ are visited by both $\tau$ and $\sigma$. We control probabilities in each third of $[0,n]$ separately.
To that end, define
\[
  \ell_{\tau}= \max \tau\cap(0,n/3), \quad \ell_{\sigma} =\max \sigma\cap(0,n/3),
\]
and define events
\begin{equation}\label{GG1G3}
  G_1: \tau\cap\sigma \cap (0,n/8) = \emptyset,\qquad G_2: \tau\cap\sigma \cap [n/3,2n/3] = \emptyset,
\end{equation}
\[
  G_3: \tau\cap\sigma \cap (7n/8,n) = \emptyset,
\]
\[
  \mD_{\gd\tau}: \tau_i-\tau_{i-1} \leq \gd n \text{ for all $i$ with } [\tau_{i-1},\tau_i] \cap [n/3,2n/3] \neq \emptyset,
\]
\[
  \mD_{\gd\sigma}: \sigma_i-\sigma_{i-1} \leq \gd n \text{ for all $i$ with } [\sigma_{i-1},\sigma_i] \cap [n/3,2n/3] \neq \emptyset,
\]
\[
  \mL_1: \ell_\tau,\ell_\sigma \in (n/4,n/3).
\]
Assuming $\gd<1/12$, we have $\mA_\gd^c \subset \mD_{\gd\tau}\cap\mD_{\gd\sigma} \subset \mL_1$.

\medskip
{\bf End thirds.}
By Lemma \ref{lem:appendix}, there exists $C_0$ such that
\begin{align}\label{revcond}
\max_{i,j\in (n/4,n/3)} \bP\big( G_1 \mid \ell_\tau=i,\ell_\sigma=j ) &= \max_{i,j\in (n/4,n/3)} \bP\big( G_1 \mid i\in\tau,j\in\sigma ) \leq C_0 \bP(G_1).
\end{align}
It follows that
\begin{align}\label{elleffect}
  \bP(\rho_1=&n,\mA_\gd^c \mid n\in\rho) \leq \bP\left( G_1\cap G_2\cap G_3 \cap \mD_{\gd\tau}\cap\mD_{\gd\sigma}\ \big|\ n\in\rho\right)
    \notag\\
  &= \bP\left( G_1\mid G_2\cap G_3 \cap \mD_{\gd\tau}\cap\mD_{\gd\sigma}\cap\{n\in\rho\} \right)  \, \bP\left( G_2\cap G_3 \cap \mD_{\gd\tau}\cap\mD_{\gd\sigma} \mid n\in\rho\right) \notag\\
  &= \bE\left( \bP(G_1 \mid \ell_\tau,\ell_\sigma)\ \big|\ G_2\cap G_3 \cap \mD_{\gd\tau}\cap\mD_{\gd\sigma}\cap\{n\in\rho\} \right) \notag\\
  &\hspace{5cm} \times \bP\left( G_2\cap G_3 \cap \mD_{\gd\tau}\cap\mD_{\gd\sigma}\mid n\in\rho\right) \notag\\
  & \leq C_0 \bP(G_1) \bP\left( G_2\cap G_3 \cap \mD_{\gd\tau}\cap\mD_{\gd\sigma}\mid  n\in\rho\right).
\end{align}
Symmetrically we obtain
\begin{align}\label{seg5}
  \bP\left( G_2\cap G_3 \cap \mD_{\gd\tau}\cap\mD_{\gd\sigma}\ \big|\ n\in\rho\right) \leq C_0 \bP(G_3)
    \bP\left( G_2\cap \mD_{\gd\tau}\cap\mD_{\gd\sigma}\ \big|\ n\in\rho \right)
\end{align}
so, using Theorem \ref{thm:Prhon2},
\begin{align}\label{seg15}
  \bP(\rho_1=n,\mA_\gd^c \mid n\in\rho) &\leq C_0 ^2\bP(\rho_1 \geq n/8)^2 
    \bP\left( G_2\cap \mD_{\gd\tau}\cap\mD_{\gd\sigma}\ \big|\ n\in\rho \right) \notag\\
  &\leq c_{7} \bP(\rho_1>n)^2 \bP\left( G_2\cap \mD_{\gd\tau}\cap\mD_{\gd\sigma}\ \big|\ n\in\rho \right) .
\end{align}

\medskip
{\bf Middle third.}
We need to bound the last probability in \eqref{seg15}.
We divide the interval $[n/3,2n/3]$ into blocks $B_i=[a_{i-1},a_i]$ of length $A \gd n$ where $A$ is a (large) constant to be specified.
We denote by $d_\tau^{(i)}$ and $f_\tau^{(i)}$ the first and last renewals, respectively, of $\tau$ in  $B_i$, and similarly for $d_\sigma^{(i)},f_\sigma^{(i)}$. Let $B_{i,\ell}:= [a_{i-1},a_{i-1}+\gd n]$ and $B_{i,r}:=[a_i-\gd n,a_i]$.
On the event $\mD_{\gd\tau}\cap\mD_{\gd\sigma}$, we  have $d_\tau^{(i)},d_\sigma^{(i)}\in B_{i,\ell}$ and $f_\tau^{(i)},f_\sigma^{(i)}\in B_{i,r}$.  Let $B_i^{(1)} := [a_{i-1},a_{i-1}+A\gd n/3]$ denote the first third of $B_i$.  Define events
\[
  \mD_{\gd\tau}^{(i)}: \tau_j - \tau_{j-1}\leq \gd n \text{ for all $j$ with } \tau_{j-1}\in B_i^{(1)},
\]
\[
  \mD_{\gd\sigma}^{(i)}: \sigma_j - \sigma_{j-1}\leq \gd n \text{ for all $j$ with } \sigma_{j-1}\in B_i^{(1)}.
\]
Using again Lemma \ref{lem:appendix}, we obtain
\begin{align}\label{GDD}
  \bP&\left( G_2\cap \mD_{\gd\tau}\cap\mD_{\gd\sigma}\ \big|\ n\in\rho \right)\notag\\
  &\leq \prod_{i\leq 1/3 A\gd} \max_{h,k\in B_{i,\ell},\, j,m\in B_{i,r}}
    \bP \bigg( \tau\cap\sigma \cap B_i^{(1)} = \emptyset, \mD_{\gd\tau}^{(i)}, \mD_{\gd\sigma}^{(i)}\, \notag\\
  &\hskip 6cm \Big| \, d_\tau^{(i)}=h, f_\tau^{(i)}=j, d_\sigma^{(i)}=k, f_\sigma^{(i)}=m \bigg) \notag\\
  &\leq \prod_{i\leq 1/3 A\gd} \max_{h,k\in B_{i,\ell}} C_0\,
    \bP \big( \tau\cap\sigma \cap B_i^{(1)} = \emptyset, \mD_{\gd\tau}^{(i)}, \mD_{\gd\sigma}^{(i)}\,\big| \,    
    d_\tau^{(i)}=h, d_\sigma^{(i)}=k \big)\, .
\end{align}

We claim that for any $\eta>0$, there exists $A>0$ such that, for $\gd$ small, for $n$ large enough, for all $h,k\in[0,\gd n)$,
\begin{equation}
\label{proba<eta}
\bP_{h,k}\big( \tau\cap\sigma \cap (0, \tfrac13 A \gd n] =\emptyset\, ,\, \mD_{\gd\tau}^{(1)}, \mD_{\gd\sigma}^{(1)} \big) \leq \eta \, .
\end{equation}
This bounds all the probabilities on the right side of \eqref{GDD} by $\eta$, which with \eqref{seg15} and \eqref{GDD} shows that, provided $\eta$ is small,
\[
  \bP(\rho_1=n,\mA_\gd^c \mid n\in\rho) \leq c_{7}\bP(\rho_1>n)^2 (C_0 \eta)^{1/3A\gd} \leq e^{-c_{6}/\gd}\bP(\rho_1>n)^2,
\]
which completes the proof of the lemma.

\medskip
It remains to prove \eqref{proba<eta}. In the case of $\ga\geq 1,\tga\geq 1$, we can drop the events $\mD_{\gd\tau}^{(1)}, \mD_{\gd\sigma}^{(1)}$ and \eqref{proba<eta} follows from Lemma \ref{lem:N=0}. So suppose $\ga<1$; we will show that $\bP_{0,0}( \mD_{\gd\tau}^{(1)} ) \leq \eta$.  (This is sufficient, since $\bP_{h,k}( \mD_{\gd\tau}^{(1)} ) \leq \bP_{\gd n,0}( \mD_{\gd\tau}^{(1)} )$ for all $h,k\in [0,\gd n)$ and the last probability is unchanged if we replace $\gd n$ with $0$ and $\frac13 A$ with $\frac13A-1$.) We therefore drop the subscript $0,0$ in the notation. 

Let $J:= \min\{j\geq 1: \tau_j-\tau_{j-1}>\gd n\}$, let $\bar \tau_1$ have the distribution of $\tau_1$ given $\tau_1\leq \gd n$, and let  $\bar\tau$ be a renewal process with gaps distributed as $\bar \tau_1$. We have for $k\geq 1$:
\begin{align}\label{jsum}
  \bP( \mD_{\gd\tau}^{(1)} ) &\leq \sum_{j=0}^{k-1} \bP(J=j+1,\tau_j>A\gd n/3) + \bP(J> k)\notag\\
  &\leq \sum_{j=0}^{k-1} \bP(J=j+1)\bP\left( \bar \tau_j > \tfrac13 A\gd n \right) 
    +\bP \left( \max_{i\leq k} (\tau_i-\tau_{i-1}) \leq \gd n\right) , \notag \\
    & \leq \bP\left( \bar \tau_k > \tfrac13 A\gd n \right) + e^{-k \bP(\tau_1 >\gd n)}\ .
\end{align}
Then we use that for any $\ga \in [0,1)$ there exist some $c_{8},c_9>0$ such that for large $n$, $\bE[\bar \tau_1] \leq c_{8} \gp(n) (\gd n)^{1-\ga}$, and $\bP(\tau_1 >\gd n) \geq c_{9} \gp(n) (\gd n)^{-\ga}$ (in fact $\bP(\tau_1> \gd n) \gg \gp(n)$ for $\ga =0$.) We obtain that
\begin{equation}
  \bP( \mD_{\gd\tau}^{(1)} ) \leq \frac{3 c_{8}}{A} k \gp(n) (\gd n)^{-\ga} + e^{- c_{9} k \gp(n) (\gd n)^{-\ga}} .
\end{equation}
Choosing $k= A^{1/2} \gp(n)^{-1} (\gd n)^{\ga}$ with $A$ large enough, we get that $\bP( \mD_{\gd\tau}^{(1)} ) \leq \eta$. This completes the proof of \eqref{proba<eta}.
\end{proof}

\subsection{Stretching argument}
\label{sec:stretching}

We next show that, on the event $\mA_{\gd}$, we can formalize the stretching previously described, and the cost of the stretching is small.  

\begin{lemma}
\label{lem:stretch}
Assume \eqref{def:alphas}. Given $\gd>0$, if $n$ is sufficiently large, then for any $k\in [0,2\gd^3 n]$ we have
\[\bP(\rho_1=n \, ;\, \mA_{\gd}(n)) \leq (1+\gd) \bP(\rho_1=n+k)\, .\]
\end{lemma}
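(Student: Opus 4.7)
The plan is to build an injective ``stretching'' map $\Phi_k\colon \{\rho_1=n\}\cap\mA_\gd \to \{\rho_1=n+k\}$ whose probability Jacobian is at least $1-O(\gd)$. For each $\omega$, identify canonically the longest gap in $[0,n]$ among the $\tau$- and $\sigma$-gaps (breaking ties, say, in favor of $\tau$ and of smallest index). Without loss of generality this is a $\tau$-gap $[\tau_{i_0-1},\tau_{i_0}]=[p,p+m]$ with $m\geq\gd n$. Let $\ell$ be the number of $\sigma$-renewals in $(p,p+m)$; the $\ell+1$ $\sigma$-gaps overlapping $[p,p+m]$ have total length $\geq m$, so the longest of them, $[\sigma_{j_1-1},\sigma_{j_1}]$, has length $t\geq m/(\ell+1)$.

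\textbf{Construction and injectivity.} Set $\tau'_i=\tau_i+k\,\ind_{\{i\geq i_0\}}$ and $\sigma'_j=\sigma_j+k\,\ind_{\{j\geq j_1\}}$. Only the $i_0$-th $\tau$-gap (now of length $m+k$) and the $j_1$-th $\sigma$-gap (now of length $t+k$) are altered. Because $[\sigma_{j_1-1},\sigma_{j_1}]$ overlaps $[p,p+m]$ we have $\sigma_{j_1}\geq p$ and $\sigma_{j_1-1}\leq p+m$, and a short case check rules out new crossings $\tau'_i=\sigma'_j$ strictly below $n+k$: for $i<i_0,\,j\geq j_1$, $\sigma'_j\geq\sigma_{j_1}+k>p\geq\tau'_i$; for $i\geq i_0,\,j<j_1$, $\tau'_i\geq p+m+k>p+m\geq\sigma'_j$; the remaining pairs are either unchanged or both shifted by $k$. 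Since $m+k$ and $t+k$ are still the largest $\tau$- and $\sigma$-gaps in $\Phi_k(\omega)$ (with the same tie-breaking), the indices $(i_0,j_1)$ can be read off from the image, so $\Phi_k$ is injective.

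\textbf{Jacobian estimate.} Since only the stretched gaps change the probability,
\[
  \frac{\bP(\Phi_k(\omega))}{\bP(\omega)}=\frac{\bP(\tau_1=m+k)}{\bP(\tau_1=m)}\cdot\frac{\bP(\sigma_1=t+k)}{\bP(\sigma_1=t)}.
\]
Using $m\geq\gd n$, $k\leq 2\gd^3 n$, the uniform convergence theorem for the slowly varying $\gp$, and $(1+k/m)^{-(1+\ga)}\geq 1-O(\gd^2)$, the first factor is $\geq 1-O(\gd^2)$ for large $n$. When $t\geq\gd^2 n$ (equivalently $\ell+1\leq 1/\gd$), the same argument applied to $\sigma$ yields a second factor $\geq 1-O(\gd)$, so the full Jacobian is $\geq 1-O(\gd)$. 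Summing over $\omega$ in this ``good'' regime and invoking injectivity gives $\bP(\rho_1=n;\mA_\gd,\,\ell+1\leq 1/\gd)\leq(1+O(\gd))\,\bP(\rho_1=n+k)$, which becomes the desired $(1+\gd)$ after a fixed rescaling of $\gd$.

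\textbf{Main obstacle.} The hard part will be the complementary regime $\ell+1>1/\gd$, where the longest overlapping $\sigma$-gap may be shorter than $\gd^2 n$ and the above $\sigma$-Jacobian is no longer controlled. I expect this case to contribute negligibly, since cramming $\geq 1/\gd$ $\sigma$-renewals into the $\tau$-gap forces many short $\sigma$-gaps, each carrying a probability factor of polynomial decay; a decomposition over the number of $\sigma$-renewals in $[p,p+m]$, combined with a bound in the spirit of Lemma~\ref{lem:longjump}, should show this event contributes at most $O(\gd)\,\bP(\rho_1=n+k)$. If this direct route is too crude, an alternative is to redefine $\Phi_k$ in this regime so that the total shift $k$ is spread across several $\sigma$-gaps rather than concentrated in one, at the cost of a more intricate injectivity bookkeeping. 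Either way, this is where the bulk of the technical work sits.
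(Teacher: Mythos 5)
Your construction works in the regime where the longest $\sigma$-gap overlapping the maximal $\tau$-gap has length comparable to $n$, but the complementary regime is not an edge case — it is in fact the \emph{typical} case whenever $\bE[\sigma_1]=\tilde\mu<\infty$ (which is exactly the regime of Theorem~\ref{thm:Prhon2}(iii), since there $\tga\geq 1$). If the maximal $\tau$-gap $[p,p+m]$ has $m\geq\gd n$, then $\sigma$ typically has on the order of $m/\tilde\mu$ renewals inside it, so $\ell$ is of order $n$, your bound $t\geq m/(\ell+1)$ gives only $t\geq O(1)$, and the $\sigma$-Jacobian $\bP(\sigma_1=t)/\bP(\sigma_1=t+k)$ is completely uncontrolled. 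Your first proposed patch — showing $\{\ell+1>1/\gd\}$ has small probability — cannot succeed because that event has probability close to $1$, not close to $0$.

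The paper's fix is to change what is being stretched. Rather than selecting a single $\sigma$-\emph{gap}, it partitions the $\tau$-gap $[p,p+m]$ into exactly three associated $\sigma$-\emph{intervals}: the $\sigma$-gap $[\sigma_{\ell_0-1},\sigma_{\ell_0}]$ containing the left endpoint $p$, the $\sigma$-gap $[\sigma_{\ell_1-1},\sigma_{\ell_1}]$ containing the right endpoint $p+m$, and the entire intervening $\sigma$-renewal block $[\sigma_{\ell_0},\sigma_{\ell_1-1}]$ (which may contain many renewals). Their lengths $t_1,t_2,t_3$ satisfy $t_1+t_2+t_3\geq m$, so one of them is $\geq m/3\geq\gd n/3$, giving uniform control of the ratio $k/t_I$. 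Crucially, stretching the middle interval by $k$ costs a factor $\bP(t_2\in\sigma)/\bP(t_2+k\in\sigma)$, which involves the \emph{renewal mass function} (regularly varying by Table~\ref{table:Doney}) rather than a single gap probability; this is what makes the middle case tractable where your single-gap approach stalls. Your second proposed patch (spreading the shift over several $\sigma$-gaps) is morally in this direction, but the paper's trick of treating the whole middle block as one unit, evaluated via $\bP(\cdot\in\sigma)$, does it without any bookkeeping over individual $\sigma$-gaps. You would also need to record enough data in the image event to ensure injectivity — the paper encodes $(j,m,p,t_1,t_2,t_3)$ and the which-of-three index $I$, so the preimage is recoverable.

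In short: the reduction to the case where the maximal gap is a $\tau$-gap, the identification of an injective stretching map $\Phi_k$, the $\tau$-side Jacobian bound, and the bookkeeping needed to invert $\Phi_k$ are all in the right spirit and essentially match the paper. But the gap you flag as the "main obstacle" is a genuine hole, and the decomposition into three $\sigma$-intervals (two end gaps plus the intermediate renewal block) is the missing idea that closes it.
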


\begin{proof}
Fix $n$ and denote
\[
  M_{\tau}:= \max \{ \tau_i -\tau_{i-1}: \tau_i \leq n\} \quad \text{and} \quad M_{\sigma}:= \max \{ \sigma_i -\sigma_{i-1}:\, \tau_i \leq n\} \, ,
  \]
\[
  \mA_{\gd}^{\tau}(n) := \{\rho_1=n\} \cap \mA_\gd \cap \{M_{\tau}\geq M_{\sigma}\}, \quad 
    \mA_{\gd}^{\sigma}(n) := \{\rho_1=n\} \cap \mA_\gd \cap \{M_{\sigma}> M_{\tau}\}.
  \]
We will show that provided that $\gd$ is small enough, for $n$ large enough and $k\in[0,2\gd^3 n]$
\begin{equation}
\label{taulargerup}
\bP(\mA_{\gd}^{\tau}(n)) 
\leq  (1+\gd) \bP(\rho_1=n+k \, ,\, M_{\tau} \geq M_{\sigma})\, .
\end{equation}
The analogous statement also holds with $\mA_{\gd}^{\sigma}(n)$ instead of $\mA_{\gd}^{\tau}(n)$; combining the two completes the proof.

\medskip

To prove \eqref{taulargerup}, define random indices 
\[
  i_0:= \min\{ i\geq 1: \tau_i-\tau_{i-1}=M_{\tau}\}, \quad \ell_0:= \min\{\ell\geq 1:\sigma_\ell>\tau_{i_0-1}\},
  \]
\[
  \ell_1 := \min\{\ell\geq 1:\sigma_\ell\geq\tau_{i_0}\}.
\]
We call $[\tau_{i_0-1},\tau_{i_0}]$ the \emph{maximal gap}, and the three intervals $[\sigma_{\ell_i-1},\sigma_{\ell_i}], i=0,1$ and $[\sigma_{\ell_0},\sigma_{\ell_1-1}]$ are called \emph{associated $\sigma$-intervals}.
We decompose the probability according to the locations of this gap and the intervals:  define the events
\begin{align*}
  \mA_{\gd}^{\tau}(n,j,m,p,t_1,t_2,t_3) &:=\mA_{\gd}^{\tau}(n) \cap 
    \big\{ \tau_{i_0}=j,\tau_{i_0}-\tau_{i_0-1}=m,\sigma_{\ell_0-1}=p,\\
  &\qquad\qquad \sigma_{\ell_0} - \sigma_{\ell_0-1} =t_1,\sigma_{\ell_1-1}-\sigma_{\ell_0}=t_2,
    \sigma_{\ell_1}-\sigma_{\ell_1-1}=t_3 \big\}\, .
\end{align*}
This means the maximal gap (in $\tau$) is from $j$ to $j+m$, and $\sigma$ has gaps from $p$ to $p+t_1$ and from $p+t_1+t_2$ to $p+t_1+t_2+t_3$, each containing an endpoint of the maximal $\tau$ gap, see Figure \ref{fig:stretch}.
For the event to be nonempty, we must have $m\geq \gd n$ and
\begin{equation}\label{nonempty}
  0 \leq p<j<p+t_1\leq p+t_1+t_2< j+m\leq p+t_1+t_2+t_3\leq n.
\end{equation}
Given such indices let us define $I\leq 3$ by $t_I =\max\{t_1,t_2,t_3\}$, with ties broken arbitrarily.
Consider now the map $\Phi_k$ which assigns to each nonempty event $\mA_{\gd}^{\tau}(n,j,m,p,t_1,t_2,t_3)$ the event 
\[
  \Phi_k(\mA_{\gd}^{\tau}(n,j,m,p,t_1,t_2,t_3)) := \begin{cases} \mA_{\gd}^{\tau}(n+k,j,m+k,p,t_1+k,t_2,t_3) &\text{if } I=1,\\
    \mA_{\gd}^{\tau}(n+k,j,m+k,p,t_1,t_2+k,t_3) &\text{if } I=2,\\
    \mA_{\gd}^{\tau}(n+k,j,m+k,p,t_1,t_2,t_3+k) &\text{if } I=3. \end{cases}
  \] 
Applying $\Phi_k$ corresponds to stretching the maximal gap and the longest of the associated $\sigma$-intervals by the amount $k$.  It is easy to see that for distinct tuples $(j,m,p,t_1,t_2,t_3)$, the corresponding events $\Phi_k(\mA_{\gd}^{\tau}(n,j,m,p,t_1,t_2,t_3))$ are disjoint subsets of $\mA_{\gd}^{\tau}(n+k)$; this just means that the relevant interval and gap lengths in the original configuration are identifiable from the stretched configuration. We claim that provided $\gd$ is small enough, for $n$ large enough and $k\in[0,2\gd^3 n]$,  
\begin{equation}\label{deltaclaim}
  \bP\left( \mA_{\gd}^{\tau}(n,j,m,p,t_1,t_2,t_3) \right) \leq (1+\gd)
     \bP\left( \Phi_k\big(\mA_{\gd}^{\tau}(n,j,m,p,t_1,t_2,t_3) \big) \right)
\end{equation}
whenever $\mA_{\gd}^{\tau}(n,j,m,p,t_1,t_2,t_3) \neq\emptyset$. Due to the aforementioned disjointness, summing this over $(j,m,p,t_1,t_2,t_3)$ immediately yields \eqref{taulargerup}. To prove \eqref{deltaclaim}, note that if $I=1$ then $t_1\geq m/3$, so $k/t_1\leq 6\gd^2$, while $k/m<2\gd^2$, so provided $\gd$ is small,
\[
  \frac{ \bP\left( \mA_{\gd}^{\tau}(n,j,m,p,t_1,t_2,t_3) \right) }{ \bP\left( \Phi_k(\mA_{\gd}^{\tau}(n,j,m,p,t_1,t_2,t_3)) \right) }
    = \frac{\bP(\tau_1=m)}{\bP(\tau_1=m+k)} \frac{ \bP(\sigma_1=t_1) }{ \bP(\sigma_1=t_1+k) } < 1+\gd.
\]
The same bound holds if $I=3$. If $I=2$ we have $t_2\geq m/3$, so $k/t_2\leq 6\gd^2$, and provided that $\gd$ is small
\[
  \frac{ \bP\left( \mA_{\gd}^{\tau}(n,j,m,p,t_1,t_2,t_3) \right) }{ \bP\left( \Phi_k(\mA_{\gd}^{\tau}(n,j,m,p,t_1,t_2,t_3)) \right) }
    = \frac{\bP(\tau_1=m)}{\bP(\tau_1=m+k)} \frac{ \bP(t_2\in\sigma) }{ \bP(t_2+k\in\sigma) } < 1+\gd.
\]
The claim \eqref{deltaclaim}, and hence the lemma, now follow.
\end{proof}

We proceed with the proof of Lemma \ref{lem:regvarying}.
Indeed, the second inequality in \eqref{stretched} is immediate from Lemmas \ref{lem:longjump} and \ref{lem:stretch}.
Also, since $v_{n}$ is regularly varying, Lemma~\ref{lem:longjump} gives that for $\gd$ small, for any $j\in(0,\gd^3 n]$, 
\[
  \bP(\rho_1=n -j \, ;\, \mA_{\gd}^c(n-j)) \leq 2e^{-c_6/\gd} v_n \, .
  \]
This and Lemma \ref{lem:stretch} yield that for any $k\in(0,\gd^3 n] \subset (0,2\gd^3(n-k)]$,
\begin{equation}
\label{lowerbound}
\bP(\rho_1=n-k) \leq (1+\gd) \bP(\rho_1=n) + 2e^{-c_6/\gd} v_n \, .
\end{equation}
and the first inequality in \eqref{stretched} follows.

\section{Proof of Theorem \ref{thm:main2}}
\label{sec:regvarying}
Let
\[
  A_n^+(\gep) := \frac{\bP(\rho_1> n) - \bP(\rho_1> (1+\gep) n)}{\gep n }\, ,
\]
\[
  A_n^-(\gep) := \frac{\bP(\rho_1> (1-\gep)n) - \bP(\rho_1 > n)}{\gep n }\, .
\]
We claim that, if $\rho$ is recurrent,  there is a constant $c_{10}>0$ such that for sufficiently small $\gep>0$, when $n$ is large,
\begin{equation}
\label{eq:vn}
v_n \leq c_{10}A_n^\pm(\gep) \, .
\end{equation}
It is sufficient to prove this for $A_n^+(\gep)$, since $v_n$ is regularly varying.
Consider first $\ga^*=0$.  
It follows readily from \eqref{Unrn} and Theorem \ref{thm:AB} that for small $\gep$, when $n$ is large we have 
\begin{equation}\label{Angep}
  A_n^+(\gep) \geq \frac12 (U_n^*)^{-2}\bP(n\in\rho) \geq \frac14 v_n\, .
\end{equation}
Next consider $\ga^*\in (0,1)$.  Here $\ga^*=1-\theta^*$, so by Theorem \ref{thm:Prhon2}, for some $c_{11}$, for small $\gep$  we have for large $n$
\[
  A_n^+(\gep) \geq c_{11}n^{-(1+\ga^*)}\psi^*(n)^{-1} = c_{11}n^{-\theta^*}\psi^*(n) n^{-2\ga^*}\psi^*(n)^{-2} \geq c_{12}v_n\, .
\]
Finally consider $\ga^*\geq1$; here $1\leq\ga\leq\tga$. Since $\bP(\tau_1=n)$ and $\bP(\sigma_1=n)$ are regularly varying, it follows from Theorem \ref{thm:Prhon2} that for $\gep$ small and large $n$,
\[
  A_n^+(\gep) \geq \frac12 \left( \tilde\mu_n\bP(\tau_1=n) + \mu_n\bP(\sigma_1=n) \right) = \tilde\mu_n\frac{\gp(n)}{2n^{1+\ga}} 
    + \mu_n\frac{\tgp(n)}{2n^{1+\tga}}\, .
\]
Using $(a+b)^2\leq 2a^2+2b^2$, we obtain
\begin{align}
   v_n &\leq 2\left( \tilde\mu_n\frac{ \gp(n) }{n^{\ga}} + \mu_n\frac{ \tgp(n) }{n^{\tga}} \right)^2 \frac{1}{ \mu_n\tilde\mu_n }   
    \leq 4\left( \tilde\mu_n\frac{\gp(n)}{n^{2\ga}}\frac{\gp(n)}{\mu_n} + \mu_n \frac{ \tgp(n) }{n^{2\tga}} \frac{ \tgp(n) }{\tilde\mu_n} \right)   \leq A_n^+(\gep),
\end{align}
where for last inequality  we used that $\frac{\gp(n)}{n^{\ga-1}\mu_n} \to 0$ as $n\to\infty$ (since $\gp(n)/\mu_n\to 0$ when $\ga=1$), and similarly  $\frac{\tilde\gp(n)}{n^{\tga-1}\tilde\mu_n} \to 0$.
The claim \eqref{eq:vn} is now proved.

\medskip
For $\gd$ sufficiently small, applying Lemma \ref{lem:regvarying} and \eqref{eq:vn} we get that for $n$ large and $c_{13}=c_{10}+1$,
\begin{align}\label{eq:lowerbdPrhon}
\bP(\rho_1 = n) \geq (1-c_{13}\gd)A_n^-(\gd^3)\, .
\end{align}
Similarly, we get 
\begin{equation}
\label{eq:upperboundPrhon}
\bP(\rho_1 = n) \leq (1+c_{13} \gd) A_n^+(\gd^3) \, .
\end{equation}

If $\ga^*=0$, as with \eqref{Angep} it follows easily from Theorem \ref{thm:AB} that for large $n$ we have
\[
  A_n^-(\gd^3) \geq (1-\gd) (U_n^*)^{-2} \bP(n\in\rho) \quad \text{and} \quad 
    A_n^+(\gd^3) \leq (1+\gd) (U_n^*)^{-2} \bP(n\in\rho),
\]
and then part (ii) of the theorem follows from \eqref{eq:lowerbdPrhon} and \eqref{eq:upperboundPrhon}.

If $\ga^*\in (0,1)$, then by Theorem \ref{thm:Prhon2}(i), when $\gd$ is small we have for large $n$
\[
  A_n^-(\gd^3) \geq (1-\gd)\frac{ \alpha^*\, \sin(\pi \alpha^*) }{\pi }\,   \psi^*(n)^{-1} n^{-(1+\alpha^*)},
\]
\[
  A_n^+(\gd^3) \leq (1+\gd)\frac{ \alpha^*\, \sin(\pi \alpha^*) }{\pi }\,   \psi^*(n)^{-1} n^{-(1+\alpha^*)},
\]
and again part (i) of the theorem follows from \eqref{eq:lowerbdPrhon} and \eqref{eq:upperboundPrhon}.

If $\ga^*\geq 1$, then by Theorem \ref{thm:Prhon2}(iii), when $\gd$ is small we have for large $n$
\[
  A_n^-(\gd^3) \geq (1-\gd)\left( \tilde\mu_n\frac{\gp(n)}{n^{1+\ga}} + \mu_n\frac{\tgp(n)}{n^{1+\tga}} \right),
\]
\[
  A_n^+(\gd^3) \leq (1+\gd)\left( \tilde\mu_n\frac{\gp(n)}{n^{1+\ga}} + \mu_n\frac{\tgp(n)}{n^{1+\tga}} \right),
\]
and part (iii) of the theorem follows once more from \eqref{eq:lowerbdPrhon} and \eqref{eq:upperboundPrhon}.
\qed

\begin{appendix}
\section{Extension of Lemma A.2 in \cite{cf:GLT10}}

We generalize here Lemma A.2 of \cite{cf:GLT10}, which covers $\ga>0$, to include $\ga=0$.  The idea is essentially unchanged, but the computations are different.
\begin{lemma}
\label{lem:appendix}
Assume that $\bP(\tau_1=k) = \gp(k)k^{-(1+\ga)}$ for some $\ga\geq 0$ and some slowly varying function $\gp(\cdot)$.
Then, there exists a constant $C_0>0$ such that, for all sufficiently large $n$, for any non-negative function $f_n(\tau)$ depending only on $\tau\cap\{0,\ldots,n\}$, we have
\[\bE[f_n(\tau) \mid 2n \in\tau] \leq C_0 \bE[f_n(\tau)]\, .\]
\end{lemma}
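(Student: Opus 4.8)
The plan is to condition on the first renewal point of $\tau$ at or after level $n$, say $\tau$ visits the site $j \in \{n, n+1, \dots\}$ first among $\{n, n+1, \dots\}$, and to use the Markov property of $\tau$ to split $f_n(\tau)$ (which depends only on $\tau \cap \{0,\dots,n\}$) from the ``bridge'' that must reach $2n$. Concretely, writing $L := \max(\tau \cap \{0,\dots,n\})$ for the last renewal of $\tau$ in $\{0,\dots,n\}$, one has
\[
  \bE[f_n(\tau)\, \ind_{\{2n\in\tau\}}] = \sum_{\ell=0}^{n} \bE\big[f_n(\tau)\, \ind_{\{L=\ell\}}\big]\; q(\ell,n),
\]
where $q(\ell,n) := \bP(2n - \ell \in \tau \cap (\{n-\ell+1,\dots\}))$ is the probability that, starting afresh from $\ell$, the renewal jumps past $n$ and later hits $2n$. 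Dividing by $\bP(2n\in\tau)$ and comparing with $\bE[f_n(\tau)] = \sum_\ell \bE[f_n(\tau)\,\ind_{\{L=\ell\}}]$, it suffices to show
\[
  \sup_{0\le \ell\le n} \frac{q(\ell,n)}{\bP(2n\in\tau)} \le C_0 ,
\]
i.e.\ a uniform-in-$\ell$ upper bound on the conditional probability that $\tau$ hits $2n$ given its last visit to $\{0,\dots,n\}$ is at $\ell$. This reduces everything to a renewal-mass-function estimate that does not involve $f_n$ at all.

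Next I would bound $q(\ell,n)$. Decomposing over the first renewal $\ell' = \ell + \tau_1'$ landing in $(n, 2n]$ (the case where $\tau$ jumps directly from $\le n$ to $> 2n$ contributes $0$ to $q$ since we need $2n \in \tau$), and then using the renewal mass function from $\ell'$ to $2n$,
\[
  q(\ell,n) \;\le\; \sum_{\ell' = n+1}^{2n} \bP(\tau_1 = \ell' - \ell)\, \bP(2n - \ell' \in \tau)\;\le\; \sum_{m=1}^{n} \bP(\tau_1 > n - \ell + m - 1)\,\ldots
\]
— more cleanly, bound $\bP(\tau_1 = \ell'-\ell) \le \bP(\tau_1 \ge n - \ell)$ is too lossy; instead keep $\bP(\tau_1 = \ell'-\ell)$ and sum. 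Since $\ell' - \ell \in (n-\ell, 2n-\ell] \subseteq (0, 2n]$ and $\bP(\tau_1 = k) = \gp(k) k^{-(1+\ga)}$, for $\ell \le n$ one has $\bP(\tau_1 = \ell' - \ell) \le \bP(\tau_1 = \ell' - \ell)$ which on the relevant range is comparable to $\gp(n)n^{-(1+\ga)}$ up to slowly-varying / constant factors (this is where the $\ga=0$ versus $\ga>0$ split matters, as the paper flags). Thus
\[
  q(\ell,n) \;\le\; C\, \gp(n)\, n^{-(1+\ga)} \sum_{r=0}^{n} \bP(r \in \tau) \;=\; C\,\gp(n)\,n^{-(1+\ga)}\, U_n .
\]
It then remains to check $\gp(n) n^{-(1+\ga)} U_n \le C_0\, \bP(2n\in\tau)$, which follows by plugging in the strong renewal theorem asymptotics of $\bP(n\in\tau)$ and $U_n$ from Table~\ref{table:Doney} (equivalently \eqref{Doney1}, \eqref{Doney}, \eqref{alpha0} and the recurrent-case Renewal Theorem), case by case in $\ga$: for $\ga > 1$ or $\mu < \infty$, $U_n \sim n/\mu$ and $\bP(2n\in\tau) \to 1/\mu$ so the ratio is $\asymp n\gp(n)n^{-(1+\ga)} \to 0$; for $\ga = 1$ with $\mu = \infty$, $U_n \sim n/\mu_n$ and $\bP(2n\in\tau) \sim 1/\mu_{2n} \sim 1/\mu_n$; for $\ga \in (0,1)$, $U_n \asymp n^\ga/\gp(n)$ and $\bP(2n\in\tau)\asymp n^{\ga - 1}/\gp(n)$; and for $\ga = 0$, $U_n \asymp n\,\gp(n)/r_n^2$ (where $r_n = \bP(\tau_1>n)$) while $\bP(2n\in\tau) \asymp \gp(n)/(n r_n^2)$, so again the ratio is bounded. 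Each case gives the ratio bounded (indeed often $o(1)$), so a single constant $C_0$ works for all large $n$.

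The main obstacle is the $\ga = 0$ case: there the inter-arrival tail $r_n$ is itself only slowly varying and $\gp(n) = o(r_n)$ need not hold in a convenient quantitative form, so the comparison $\bP(\tau_1 = k) \asymp \gp(n) n^{-1}$ over $k \in (n-\ell, 2n-\ell]$ and the final inequality $\gp(n) n^{-1} U_n \le C_0 \bP(2n \in \tau)$ must be handled using \eqref{alpha0} together with Karamata/uniform-convergence properties of slowly varying functions rather than a crude power comparison; one has to be careful that the slowly varying factors ($\gp$, $r_n$, and the slowly varying part of $U_n$) line up with the right exponents. I would isolate this as a short lemma: for $\ga = 0$, $\sum_{k=0}^n \bP(k\in\tau) \le C\, n\, \bP(n\in\tau)$, which is exactly Karamata's theorem applied to the regularly varying (index $-1$) sequence $\bP(k\in\tau)$ combined with \eqref{alpha0}, and then $\bP(2n\in\tau) \sim \bP(n\in\tau)$ by slow variation. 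With that in hand the argument closes uniformly in $\ell$, and monotonicity of $f_n \ge 0$ plus the decomposition above finish the proof.
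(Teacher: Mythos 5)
Your high-level plan is the same as the paper's: condition on the last renewal $L$ in $\{0,\dots,n\}$, use the renewal property to split $f_n$ from the bridge to $2n$, and reduce to a renewal-mass-function estimate uniform in $\ell$. But two intermediate steps are incorrect, and they are exactly what the paper's proof is careful about. First, the factorization
$\bE[f_n(\tau)\ind_{\{2n\in\tau\}}] = \sum_{\ell}\bE[f_n(\tau)\ind_{\{L=\ell\}}]\, q(\ell,n)$
is off by a conditioning factor. On $\{L=\ell\}$, $f_n$ depends only on $\tau\cap\{0,\dots,\ell\}$, and the renewal property at $\ell$ gives
\[
\bE\big[f_n(\tau)\ind_{\{L=\ell\}}\ind_{\{2n\in\tau\}}\big]=\bE\big[f_n(\tau)\ind_{\{L=\ell\}}\big]\cdot\frac{q(\ell,n)}{\bP(\tau_1>n-\ell)}\,,
\]
because $\{L=\ell\}$ already includes the event that the next jump exceeds $n-\ell$, and your $q$ counts it again. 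Consequently, what you actually need is the uniform bound
$\sup_{\ell} q(\ell,n)/\big(\bP(\tau_1>n-\ell)\bP(2n\in\tau)\big)\leq C_0$,
i.e.\ a bound on the conditional probability $\bP(2n\in\tau\mid L=\ell)$, which is the quantity the paper works with; bounding $q(\ell,n)/\bP(2n\in\tau)$ alone is not enough (the denominator $\bP(\tau_1>n-\ell)$ is small precisely when $\ell$ is small, e.g.\ $\ell=0$).

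Second, the estimate of $q(\ell,n)$ fails: you claim that $\bP(\tau_1=\ell'-\ell)$ is comparable to $\gp(n)n^{-(1+\ga)}$ over the whole range $\ell'\in(n,2n]$, but when $\ell$ is close to $n$ the jump $\ell'-\ell$ can be of order $1$, where $\bP(\tau_1=\ell'-\ell)$ is a constant. In fact, for $\ell=n-1$ one has $q(\ell,n)\geq\bP(\tau_1=2)\bP(n-1\in\tau)\asymp\bP(n\in\tau)$, while your claimed bound $C\gp(n)n^{-(1+\ga)}U_n$ is $\asymp n^{-1}$ when $\ga\in(0,1)$, which is much smaller; so the inequality $q(\ell,n)\leq C\gp(n)n^{-(1+\ga)}U_n$ is false. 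The paper resolves this by splitting the sum (in its notation, over $j\leq n/2$ and $j>n/2$): on the first piece it bounds $\bP(n-j\in\tau)$ by $\sup_{k\geq n/2}\bP(k\in\tau)\asymp\bP(2n\in\tau)$ and sums the conditional jump probabilities to $\leq 1$; on the second piece $j+n-m\geq n/2$, so the conditional jump probability is $\leq c\,\bP(\tau_1=n\mid\tau_1\geq n)$ uniformly in $m$, and the other factor contributes $U_n$. Your final case-by-case check using Table~\ref{table:Doney} and \eqref{alpha0} is in the right spirit and matches the last step of the paper's proof, but it is applied to an intermediate bound that does not hold; with the conditioning kept and the sum split as above, the ratio reduces to $n^{-1}U_n/\bP(2n\in\tau)$ for $\ga>0$ (and to $\bP(\tau_1>n)U_n$ for $\ga=0$ via \eqref{alpha0}), each of which is bounded.
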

\begin{proof}
We define $X_n$ to be the last $\tau$-renewal up to $n$. It is sufficient to show that there exists $c_{14}>0$ such that for large $n$, for any $0\leq m \leq n$
\begin{equation}\label{tiedown}
  \bP(2n \in \tau \mid X_n = m) \leq c_{14} \bP(2n \in\tau) \, .
\end{equation}
To prove this, we write
\begin{align}\label{lasthit}
\bP(2n \in \tau \mid X_n = m)  = \sum_{j=1}^n  \bP(\tau_1 = j + n-m | \tau_1 \geq n-m)\bP(n-j \in \tau) .
\end{align}
We split this sum into $j\leq n/2$ and $j>n/2$.

For $j\leq n/2$, we use that $\bP(k\in\tau)$ is regularly varying and $n-j\geq n/2$, to bound the corresponding part of the sum in \eqref{lasthit} by
\[\sup_{ k \geq n/2} \bP(k\in\tau)  \times \sum_{j=1}^n \bP(\tau_1 = j + n-m | \tau_1 \geq n-m)  \leq c_{15} \bP(2n\in\tau) \, .\]

For $j> n/2$, we use that for $n\geq j> n/2$ and $n\geq m\geq 0$,
\[
  \bP(\tau_1=j+n-m \mid \tau_1\geq n-m) \leq c_{16} \bP(\tau_1=n \mid \tau_1\geq n-m) \leq c_{16} \bP(\tau_1=n \mid \tau_1\geq n)
  \]
to bound the corresponding part of the sum in \eqref{lasthit} by
\[ 
c_{16}  \frac{\bP(\tau_1=n)}{\bP(\tau_1\geq n)}U_n \leq 
\begin{cases}
  c_{17} \bP(2n \in\tau) \quad \text{if } \ga =0 ; \\
c_{18} n^{-1}U_n \leq c_{19}  \bP(2n\in\tau) \quad \text{if } \ga>0 .
\end{cases}
\]
Here for $\ga=0$ we used \eqref{alpha0}, and for $\ga>0$ we used the regular variation of $\bP(n\in\tau)$. This completes the proof of \eqref{tiedown}.
\end{proof}

\end{appendix}

\bibliographystyle{plain}

\end{document}